\newtheorem*{theorem}{Main theorem}
\newtheorem{corollary}{Corollary}
\newtheorem{lemma}{Lemma}[section]
\theoremstyle{remark}
\newtheorem{remark}{Remark}[section]
\newcommand\decdot{\raisebox{0.1ex}{\textbf{.}}}
\DeclareMathOperator{\e}{e}
\title{Patterns in rational base number systems}
\author[J.~F.~Morgenbesser]{Johannes F. Morgenbesser}
\address{Universit\"at Wien, Fakult\"at f\"ur Mathematik, Nordbergstrasse 15, 1090 Wien, AUSTRIA}
\email{johannes.morgenbesser@univie.ac.at}
\author[W.~Steiner]{Wolfgang Steiner}
\address{LIAFA, CNRS UMR 7089, Universit\'e Paris Diderot -- Paris 7,
Case 7014, 75205 Paris Cedex 13, FRANCE}
\email{steiner@liafa.jussieu.fr}
\author[J. M. Thuswaldner]{J\"org M. Thuswaldner}
\address{Department of Mathematics, 275 TMCB, Brigham Young University, Provo, UT 84602, USA}
\email{joerg.thuswaldner@unileoben.ac.at}
\subjclass[2010]{11A63 (primary), 28A80, 52C22 (secondary)}
\keywords{Rational number system, normal numbers, $p$-adic numbers, Fourier analysis, sum-of-digits function}
\thanks{This research was supported by the Austrian Science Fund (FWF), projects P21209, S9610, and W1230. Part of this research was conducted while the second author was visiting academic at the Department of Computing of the Macquarie University, Sydney.}
\begin{document}

\begin{abstract}
Number systems with a rational number $a/b > 1$ as base have gained interest in recent years. In particular, relations to Mahler's $\frac32$-problem as well as the Josephus problem have been established. In the present paper we show that the patterns of digits in the representations of positive integers in such a number system are uniformly distributed. We study the sum-of-digits function of number systems with rational base $a/b$ and use representations w.r.t.\ this base to construct normal numbers in base $a$ in the spirit of Champernowne.

The main challenge in our proofs comes from the fact that the language of the representations of integers in these number systems is not context-free. The intricacy of this language makes it impossible to prove our results along classical lines. In particular, we use self-affine tiles that are defined in certain subrings of the ad\`ele ring $\mathbb{A}_\mathbb{Q}$ and Fourier analysis in $\mathbb{A}_\mathbb{Q}$. With help of these tools we are able to reformulate our results as estimation problems for character sums.
\end{abstract}

\maketitle

\section{Introduction}

Starting with the well-known papers by Gelfond~\cite{Gelfond:68} and Delange~\cite{D75}, distribution properties of sets defined in terms of digital restrictions and sum-of-digits functions have been studied systematically by many authors. Recently, Mauduit and Rivat~\cite{MR:10} solved a problem on the distribution of the $q$-ary sum-of-digits function of primes in residue classes (this problem was already stated in Gelfond's paper~\cite{Gelfond:68}). In their proofs, they used sophisticated exponential sum methods. Due to their paper, the area gained new impact and many new results have been proved in the past few years; see e.g.\ \cite{DMR:09,DMR:11,MR:09,Mor:10}.

The present paper is devoted to digit patterns and the sum-of-digits function for number systems with a rational number as base. 
We begin with the definition of the representation of positive integers that was given by Akiyama, Frougny, and Sakarovitch \cite{AFS:08}.
For given coprime integers $a, b$ with $a > b \ge 1$, let $a/b$ be the \emph{base} and $\mathcal{D} = \{0, 1, \ldots, a-1\}$ the set of \emph{digits}.
Then every positive integer $n$ has a unique finite representation of the form
\begin{equation}\label{rationalrepresentation}
n = \frac{1}{b} \sum_{k= 0}^{\ell(n)-1} \varepsilon_k(n)\, \left(\frac{a}{b}\right)^k, \quad \varepsilon_k(n) \in \mathcal{D},
\end{equation}
with $\varepsilon_{\ell(n)-1}(n) \ne 0$. We call the pair $(a/b, \{0,1,\ldots,a-1\})$ a \emph{rational base number system}. The representation in \eqref{rationalrepresentation} is the \emph{representation of~$n$ in base~$a/b$}. For example, the representations of $1, 2, \ldots, 10$ in base~$3/2$ are
\begin{align*}
(2)&=1,\quad &(21)&=2, \quad &(210)&=3, \quad &(212)&=4, \quad &(2101)&=5,\\
(2120)&=6,\quad &(2122)&=7,\quad &(21011)&=8,\quad &(21200)&=9,\quad &(21202)&=10.
\end{align*}
Note that --- when $b > 1$ holds --- these representations are different from the $\beta$-expansions with $\beta = a/b$ that were defined by R\'enyi~\cite{Renyi57}. 

One of the motivations to study these number systems is their relation to Mahler's $\frac32$-problem that was pointed out in \cite[Section~6]{AFS:08}. Mahler asked whether there exists $z\in\mathbb{R}\setminus\{0\}$ such that the fractional part of $z(3/2)^n$ falls into $[0, 1/2)$ for all $n\ge0$; see \cite{Mahler:68}. Among other things, Akiyama, Frougny, and Sakarovitch  could prove with the help of the base $3/2$ number system that there exist infinitely many $z\in\mathbb{R}\setminus\{0\}$ such that $\| z (3/2)^n\|<1/3$ holds for all $n\ge0$, where $\|x\|$ denotes the distance of $x$ to its nearest integer; see~\cite[Corollary 4]{AFS:08}.
These number systems also have connections to the Josephus problem; see \cite[Section~4.4]{AFS:08} and \cite[Example~2.1]{ST:11}. 

In \cite{AFS:08}, the authors put some emphasis on the investigation of the language~$L$ of words on the alphabet~$\mathcal{D}$ defined by the representations of the positive integers in base~$a/b$.
It turns out that $L$ is not regular and even not context-free (in the case $b>1$); see~\cite[Corollaries 7 and~9]{AFS:08}. 
This makes it hard to get distribution results for the patterns of their digit strings as well as their sum-of-digits function. 
The present paper contains first results in this direction. 
Using Fourier analysis in the ad\`ele ring $\mathbb{A}_\mathbb{Q}$ of $\mathbb{Q}$ as well as character sum estimates, we are able to prove that each digit string of a given length~$\ell$ occurs in the representations of the integers $1,2, \ldots, N$ in base $a/b$ with equal frequency~$a^{-\ell}$. We also provide an error term. As a corollary, we state a result in the spirit of
Delange~\cite{D75} on the summatory function of the sum-of-digits function. Moreover, we give a new construction of normal numbers defined in terms of representations in base $a/b$.

To be able to state our main result, we define the function $S_w(N)$, which counts the number of occurrences of the pattern $w$ in the  base $a/b$ representations
of the first $N$ positive integers. In particular, let $w = (w_{\ell-1},\ldots,w_1,w_0)$, with $w_i\in\mathcal{D}$, be a finite sequence of digits. The length of $w$ is denoted by $|w|=\ell$. We set
\[
S_w(N) = \sum_{k=0}^{\ell(N)-|w|} S_{k,w}(N)
\]
with
\[
S_{k,w}(N) = \#\{ 1\le n\le N:\, \ell(n) \ge k + |w|,\, (\varepsilon_{k+|w|-1}(n), \ldots, \varepsilon_{k+1}(n), \varepsilon_{k}(n))=w\}.
\]

\begin{theorem} 
Let  $a, b \in \mathbb{N}$  with  $(a,b)=1$ and  $a > b \ge 1$. Then
\[
S_w(N) = \frac{N}{a^{|w|}} \log_{a/b} N + \mathcal{O}\big(N \log\log N \big).
\]
\end{theorem}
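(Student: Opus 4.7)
My plan is to linearize the digit-pattern indicator by orthogonality of characters modulo $a$, isolate the main term, and bound the remaining character sums. From the dynamical recursion of~\cite{AFS:08}, $\varepsilon_k(n) \equiv b\, T^k(n) \pmod{a}$ where $T(n) = \lfloor bn/a \rfloor$, hence
\[
\mathbf{1}_{(\varepsilon_{k+|w|-1}(n),\ldots,\varepsilon_k(n))=w} = \frac{1}{a^{|w|}} \sum_{c_0,\ldots,c_{|w|-1}=0}^{a-1} \e\Bigl(\tfrac{1}{a}\sum_{j=0}^{|w|-1} c_j\bigl(b\, T^{k+j}(n) - w_j\bigr)\Bigr).
\]
Summing the trivial character $\vec c = \vec 0$ over $n$ with $\ell(n)\ge k+|w|$ and over $k = 0,\ldots,\ell(N)-|w|$ delivers the main term $N\log_{a/b}(N)/a^{|w|}$ up to a boundary loss of $\mathcal{O}(N)$, since $\#\{n\le N:\ell(n)\ge k+|w|\} = N - \mathcal{O}((a/b)^{k+|w|})$ and the geometric tail telescopes.

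\textbf{Main obstacle.} Everything reduces to bounding, by $\mathcal{O}(N\log\log N)$ in aggregate, the character sums
\[
\Phi_k(N;\vec c) = \sum_{\substack{n\le N\\ \ell(n)\ge k+|w|}} \e\Bigl(\tfrac{b}{a}\sum_{j=0}^{|w|-1} c_j\, T^{k+j}(n)\Bigr), \qquad \vec c\neq \vec 0,
\]
summed over $0\le k \le \log_{a/b} N$ and the $\mathcal{O}(1)$ nontrivial characters. The obstruction is that $T^k$ has no convenient closed form on $\mathbb{N}$, and, as noted in~\cite{AFS:08}, the language of admissible expansions is not even context-free; so the usual digit-by-digit exponential-sum methods of Gelfond--Delange type cannot be invoked directly, and a more flexible linearization is needed.

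\textbf{Ad\`elic linearization.} To circumvent this, I would embed $\mathbb{Z}[1/b]$ diagonally into $\mathbb{K}_b = \mathbb{R}\times\prod_{p\mid b}\mathbb{Q}_p$, viewed as a subring of $\mathbb{A}_\mathbb{Q}$. Multiplication by $a/b$ acts on $\mathbb{K}_b$ as a hyperbolic automorphism, so the orbit $\{T^k(n)\}_k$ becomes the orbit of a genuine linear map on a self-affine \emph{rational-base tile} $\mathcal T$ which tiles $\mathbb{K}_b$ by $\mathbb{Z}[1/b]$-translates. The set equation of $\mathcal T$ yields Fourier identities on $\widehat{\mathbb{K}}_b$ by which each $\Phi_k(N;\vec c)$ can be split according to the subdivision of $\mathcal T$; at each iteration the dual character is pushed by the contragredient of multiplication by $a/b$, contracting at the non-archimedean places, until after enough steps it falls into the trivial class and the residual sum becomes estimable via equidistribution of the lifts of $n\le N$ inside $\mathcal T$.

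\textbf{Final assembly.} Summing the per-$k$ bounds over $k\le\log_{a/b} N$ should yield a saving of one logarithmic factor over the trivial $\mathcal{O}(N\log N)$, delivering the target error $\mathcal{O}(N\log\log N)$. I expect the technical core of the argument to consist in quantifying the Fourier decay of $\mathcal T$ and in verifying that the transported characters remain sufficiently non-resonant throughout the iteration, so that no single $k$ contributes more than $\mathcal{O}(N/\log N)$ on average; this is where the ad\`ele-theoretic framework is essential, since only in $\mathbb{A}_\mathbb{Q}$ can the archimedean and $p$-adic scales associated with the rational base $a/b$ be handled on equal footing.
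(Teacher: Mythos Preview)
Your outline lands on the same ad\`elic framework the paper uses --- embedding $\mathbb{Z}[1/b]$ diagonally into $\mathbb{R}\times\prod_{p\mid b}\mathbb{Q}_p$, reading off digits via a self-affine tile, and doing Fourier analysis on the quotient --- so the strategic direction is correct. But what you have written is a plan rather than a proof, and the parts you explicitly defer (``I expect the technical core\ldots'', ``should yield\ldots'') are precisely where the content lies.

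The concrete gap is twofold. First, you propose to work with the Fourier decay of the tile $\mathcal{T}$ itself and to iterate the set equation until the dual character ``falls into the trivial class''. That mechanism does not carry the argument: the boundary of $\mathcal{T}$ is fractal, and the indicator of a subtile has no directly usable Fourier decay. The paper instead replaces each subtile by a finite-stage box approximation $\mathcal{F}_{d,r}$ and then by an Urysohn smoothing $f_{d,r}$ (a convolution against a small box $D_r$), which produces explicit coefficients supported on $b^{-r}\mathbb{Z}$ with $\xi^{-2}$ decay and hence absolutely convergent Fourier series. Second, this introduces an approximation error controlled by the number of level-$r$ boxes meeting the tile boundary; a separate geometric lemma shows this count is $\mathcal{O}(\varrho^r)$ for some $\varrho<a$, and the resulting error $F_{k,r}$ is then estimated by a further character-sum argument. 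Only after also discarding the $M$ lowest and highest digit positions (at cost $\mathcal{O}(NM)$) and balancing the two parameters --- taking $r\asymp \log\log N/\log(a/\varrho)$ and $M\asymp \log\log N$ --- does the error collapse to $\mathcal{O}(N\log\log N)$. Your heuristic that each $k$ contributes $\mathcal{O}(N/\log N)$ on average is not what happens and would not, on its own, explain the double logarithm; the $\log\log N$ is an artifact of this two-scale optimization, which your proposal does not contain.
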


The arithmetic function $s_{a/b}(n) = \sum_{0 \le k < \ell(n)} \varepsilon_k(n)$
is well-defined and is called the \emph{sum-of-digits function in base~$a/b$}.
From our main theorem we obtain the following corollary on the summatory function of~$s_{a/b}$.

\begin{corollary}\label{c1}
Let $a, b \in \mathbb{N}$ with $(a,b)=1$ and  $a > b \ge 1$. Then
\[
\sum_{n=1}^N s_{a/b}(n) = \frac{a-1}{2} N \log_{a/b} N + \mathcal{O}\big(N \log\log N \big).
\]
\end{corollary}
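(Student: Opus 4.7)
The plan is to deduce Corollary~\ref{c1} directly from the Main theorem by swapping the order of summation and specialising to single-digit patterns. Writing out the sum-of-digits function according to its definition,
\[
\sum_{n=1}^N s_{a/b}(n) = \sum_{n=1}^N \sum_{k=0}^{\ell(n)-1} \varepsilon_k(n) = \sum_{d=0}^{a-1} d \cdot \#\{(n,k) : 1 \le n \le N,\ 0 \le k < \ell(n),\ \varepsilon_k(n)=d\}.
\]
The inner cardinality on the right is precisely what is counted by $S_w(N)$ when $w = (d)$ is the one-letter pattern of length $|w|=1$; indeed, summing the defining quantities $S_{k,(d)}(N) = \#\{1 \le n \le N :\ \ell(n)\ge k+1,\ \varepsilon_k(n)=d\}$ over $0 \le k \le \ell(N)-1$ gives exactly this count. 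Hence
\[
\sum_{n=1}^N s_{a/b}(n) = \sum_{d=0}^{a-1} d\, S_{(d)}(N).
\]

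Next, I would plug in the Main theorem with $|w|=1$, which gives $S_{(d)}(N) = \frac{N}{a} \log_{a/b} N + \mathcal{O}(N \log\log N)$ for every $d \in \{0,1,\ldots,a-1\}$. Summing over the $a$ digits, weighted by $d$, yields
\[
\sum_{n=1}^N s_{a/b}(n) = \frac{N}{a}\log_{a/b} N \sum_{d=0}^{a-1} d + \mathcal{O}\bigl(a\, N \log\log N\bigr) = \frac{a-1}{2}\, N \log_{a/b} N + \mathcal{O}(N\log\log N),
\]
where I used $\sum_{d=0}^{a-1} d = a(a-1)/2$ and absorbed the factor~$a$ (a constant depending only on the fixed base) into the implied constant of the error term.

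Since every step is an elementary manipulation once the Main theorem is available, there is no real obstacle here. The only point that should be mentioned explicitly is that the two ranges of summation agree: the outer index $k$ in the definition of $S_{(d)}(N)$ runs up to $\ell(N)-|w| = \ell(N)-1$, and this is exactly the range needed so that every pair $(n,k)$ with $1 \le n \le N$ and $0 \le k < \ell(n)$ is counted once and only once. With this bookkeeping confirmed, the corollary follows immediately.
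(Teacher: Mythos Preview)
Your proof is correct and is essentially identical to the paper's own argument: both rewrite $\sum_{n=1}^N s_{a/b}(n)$ as $\sum_{d\in\mathcal{D}} d\, S_{(d)}(N)$ and then apply the Main theorem with $|w|=1$. Your explicit check that the summation ranges match (using that $\ell$ is non-decreasing, so $\ell(N)=\max_{1\le n\le N}\ell(n)$) is a detail the paper leaves implicit.
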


The next definition generalizes the famous Champernowne constant~$\mathfrak{c}$, the decimal expansion of which is given by $\mathfrak{c}=0\, \decdot\, 1\,2\,3\,4\,5\,6\,7\,8\,9\,10\,11\,12\,\cdots.$
Let $\mathfrak{z}_{a/b}$ be the unique real number in $(0,1)$ that has a (standard) representation in base $a$  which is obtained by concatenating the digits of $1,2,3,\ldots$ in their representation in base~$a/b$, that is
\[
\mathfrak{z}_{a/b} = 0 \, \decdot \, \varepsilon_0(1)\, \varepsilon_{\ell(2)-1}(2) \cdots \varepsilon_{1}(2) \varepsilon_{0}(2) \, \varepsilon_{\ell(3)-1}(3) \cdots \varepsilon_{1}(3) \varepsilon_{0}(3) \, \varepsilon_{\ell(4)-1}(4) \cdots \varepsilon_{1}(4) \varepsilon_{0}(4)\,\cdots
\]
or, more precisely,
\[
\mathfrak{z}_{a/b} = \sum_{n=1}^\infty \sum_{k=1}^{\ell(n)} \frac{\varepsilon_{\ell(n)-k}(n)}{a^{k+\sum_{j=1}^{n-1}\ell(j)}}\,.
\]
For example, the real number $\mathfrak{z}_{3/2}$ is given in base $3$ by
\[
\mathfrak{z}_{3/2} = 0\,\decdot\,2\,21\,210\,212\,2101\,2120\,2122\,21011\,\cdots.
\]
In the case that $a=10$ and $b=1$, $\mathfrak{z}_{a/b}$ is exactly the constant~$\mathfrak{c}$. Champernowne~\cite{Champernowne:33} proved that $\mathfrak{c}$ is normal in base~$10$, i.e., every digit string $w$ of length $|w|$ occurs with equal frequency $10^{-|w|}$ in the base $10$ representation of $\mathfrak{c}$. We have the following result.

\begin{corollary}\label{c2}
Let $a, b \in \mathbb{N}$ with $(a,b)=1$ and  $a > b\ge 1$. Then $\mathfrak{z}_{a/b}$ is normal in base~$a$.
\end{corollary}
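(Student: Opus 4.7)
The plan is to deduce normality directly from the Main theorem via a bookkeeping argument that relates pattern occurrences in the concatenated base-$a$ string defining $\mathfrak{z}_{a/b}$ to occurrences inside individual base-$a/b$ representations. Fix a pattern $w$ of length $\ell=|w|$ over $\{0,1,\ldots,a-1\}$, and for $M\ge 1$ let $T_w(M)$ be the number of occurrences of $w$ among the first $M$ base-$a$ digits of $\mathfrak{z}_{a/b}$. Normality in base~$a$ is the statement $T_w(M)/M\to a^{-\ell}$ for every such~$w$.

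Set $L(N)=\sum_{n=1}^{N}\ell(n)$, the position in the concatenated string immediately after the representation of~$N$. Given $M$, pick the unique $N$ with $L(N-1)\le M<L(N)$. Occurrences of $w$ among the first $M$ digits split into three groups: (i) those lying entirely inside the representation of some $n\le N-1$, contributing exactly $S_w(N-1)$; (ii) those straddling a boundary between consecutive representations, contributing $\mathcal{O}(N)$ since each of the at most $N$ boundaries admits at most $\ell-1$ starting positions for such a pattern (plus an $\mathcal{O}(1)$ correction for the finitely many indices $n$ with $\ell(n)<\ell$); (iii) those beginning inside the representation of $N$, contributing at most $\ell(N)=\mathcal{O}(\log N)$. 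Hence $T_w(M)=S_w(N-1)+\mathcal{O}(N)$.

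To relate $M$ to $N$, note that from \eqref{rationalrepresentation} together with $\varepsilon_{\ell(n)-1}(n)\ne 0$ and $\varepsilon_k(n)\le a-1$ one obtains $(a/b)^{\ell(n)-1}\le bn\le \frac{b(a-1)}{a-b}\,(a/b)^{\ell(n)}$, so $\ell(n)=\log_{a/b}n+\mathcal{O}(1)$. Summation yields $L(N)=N\log_{a/b}N+\mathcal{O}(N)$, and since $M-L(N-1)\le \ell(N)=\mathcal{O}(\log N)$ one also has $M=N\log_{a/b}N+\mathcal{O}(N)$; in particular $M\to\infty$ is equivalent to $N\to\infty$. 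Substituting the Main theorem's estimate
\[
S_w(N-1)=\frac{N}{a^{\ell}}\log_{a/b}N+\mathcal{O}(N\log\log N)
\]
into $T_w(M)=S_w(N-1)+\mathcal{O}(N)$ and dividing by $M$ gives
\[
\frac{T_w(M)}{M}=\frac{1}{a^{\ell}}+\mathcal{O}\!\left(\frac{\log\log N}{\log N}\right)\longrightarrow \frac{1}{a^{\ell}},
\]
which is exactly the normality of $\mathfrak{z}_{a/b}$ in base~$a$.

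The argument is essentially bookkeeping; the key (and pleasant) observation is that because $\ell(n)$ grows only logarithmically, the total length $L(N)\sim N\log_{a/b}N$ dominates both the $\mathcal{O}(N)$ boundary contribution and the $\mathcal{O}(N\log\log N)$ error coming from the Main theorem. No additional analytic input (character sums, self-affine tiles, Fourier analysis on $\mathbb{A}_\mathbb{Q}$) is required beyond what is already contained in the Main theorem, so the only conceivable obstacle is ensuring that the various error terms remain negligible compared to the main term; this is immediate from the length estimate for $\ell(n)$.
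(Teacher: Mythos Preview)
Your argument is correct and follows essentially the same route as the paper's own proof: both relate the pattern count in the concatenated string to $S_w$ of an appropriate truncation index, bound the boundary and tail contributions by $\mathcal{O}(N)$, use the length asymptotic to get $M\sim N\log_{a/b}N$, and then invoke the Main theorem. The only differences are notational (your $T_w(M)$, $L(N)$, $N$ versus the paper's $\gamma_w(x)$, $N_x$) and that you spell out the three-part decomposition more explicitly than the paper's two-sided inequality $S_w(N_x)\le\gamma_w(x)\le S_w(N_x)+(|w|-1)(N_x-1)+\ell(N_x+1)$.
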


\begin{remark}
We mention here that all our results and proofs remain valid (with obvious changes) if the basis $a/b$ is negative and the set of digits is $\mathcal{D}=\{0,1,\ldots,|a|-1\}$. In order to keep the exposition as simple as possible we confine ourselves to positive integers $a$ and $b$ throughout this paper.
\end{remark}

\section{Plan of the proof}\label{section:plan}
To analyze~$S_w(N)$, it is important to understand the structure of the digits of $n \le N$ with respect to the given base. In the case of the standard $q$-ary representation in~$\mathbb{N}$ (i.e., $a = q$, $b = 1$), this is relatively simple. Indeed,
we have $n/q^{k+1} = \sum_{0 \le j < \ell(n)} \varepsilon_j(n)\, q^{j-k-1}$, and this sum can be split into
\begin{equation}\label{digitformula}
\sum_{k < j < \ell(n)} \varepsilon_j(n)\, q^{j-k-1} \in \mathbb{Z}[q] = \mathbb{Z} \quad \mbox{and} \quad \sum_{0 \le j \le k} \varepsilon_j(n)\, q^{j-k-1} \in \bigg[\frac{\varepsilon_k(n)}{q}, \frac{\varepsilon_k(n)+1}{q}\bigg).
\end{equation}
Therefore, the digit $\varepsilon_k(n)$ in base~$q$ is equal to~$d\in\{0, \ldots, q-1\}$ if and only if the fractional part of $n/q^{k+1}$ lies in the half-open interval $[d/q, (d+1)/q)$.
This makes it easy to employ analytic methods in order to study the frequency of certain digit patterns. The first problem consists in finding related formulas for rational base number systems.
However, the most crucial difference to the standard base~$q$ representations is the already discussed fact that the language~$L$ of words on the alphabet $\mathcal{D}$ defined by the representations of the positive integers in base~$a/b$ cannot be described in a simple way. Thus, there seem to be no elementary combinatorial considerations proving our results.

To overcome the difficulties occurring in the case of rational base number systems we have to embed the rational numbers into a subring $\mathbb{K}_{a/b}$ of the ad\`ele ring $\mathbb{A}_\mathbb{Q}$, where the (embedding of the) set $\mathbb{Z}[a/b] = \mathbb{Z}[1/b]$ forms a lattice; see Section~\ref{section:fourier} for a precise definition.
This makes it possible to find a compact, self-affine \emph{fundamental domain} $\mathcal{F}$ of $\mathbb{Z}[1/b]$ in $\mathbb{K}_{a/b}$ which is related to the underlying rational number system. Using its self-affine structure, the set $\mathcal{F}$ can be written as a union of subsets~$\mathcal{F}_d$ ($d\in\mathcal{D}$), which play a similar role as the intervals $[d/q,(d+1)/q)$ do for $q$-ary representations in \eqref{digitformula}; see Figure~\ref{fig:Fd32} for a picture of these sets. With help of these sets we are able to establish formulas similar to the one in \eqref{digitformula} in the context of rational base number systems. This enables us to use Fourier analysis in $\mathbb{K}_{a/b}$ in order to reformulate the problem of counting the digit patterns as a problem on estimating character sums. Since $\mathbb{K}_{a/b}$ is a subring of the ad\`ele ring~$\mathbb{A}_\mathbb{Q}$, the characters of this ad\`ele ring are used to define the appropriate Fourier transformation and its inverse; see e.g.\ Tate's thesis~\cite{Tate:67} or Weil~\cite{Weil:73}. Finally, using ideas reminiscent of Drmota et al.\!~\cite{DMR:09}, we estimate the character sums emerging from this Fourier transformation process.

The paper is organized as follows. In Section~\ref{section:fourier}, we give some definitions and set up the environment for the required Fourier analysis. This includes analyzing the Pontryagin dual of the fundamental domain~$\mathcal{F}$ as well  as the calculation of Fourier coefficients  for Urysohn functions related to~$\mathcal{F}$. In Section~\ref{section:proofthm}, we perform the estimates of the character sums emerging from the Fourier transformation process in order to prove the main results.
Finally, in Section~\ref{section:perspectives} we briefly discuss some open questions and possible directions of future research related to the topic of the present paper.

\section{Fourier analysis on fundamental domains}\label{section:fourier}

Throughout this paper, we set
\[
\alpha = a/b.
\]
In this section, we study self-affine tiles associated with representations in rational bases.
These tiles are special cases of the \emph{rational self-affine tiles} defined in~\cite{ST:11}. We need several notations and definitions.

For each (finite) rational prime $p$ denote the $p$-adic completion of~$\mathbb{Q}$ by  $\mathbb{Q}_p$, that is,
the field $\mathbb{Q}_p$ is the completion of $\mathbb{Q}$ w.r.t.\ the  topology induced by the $p$-adic exponent $v_p$. As usual, we write $\mathbb{Z}_p$ for the ring of integers of $\mathbb{Q}_p$. The fractional part of $x\in \mathbb{Q}_p$ will be denoted by $\lambda_p(x)$, i.e., $\lambda_p(\sum_{j=k}^\infty d_jp^j)=\sum_{j=k}^{-1} d_jp^j$ for all sequences $(d_j)_{j\ge k}$ with $d_j\in\{0,\ldots,p-1\}$, $k<0$. For the (unique) infinite prime $p=\infty$ set $\mathbb{Q}_p=\mathbb{R}$, which is the completion of~$\mathbb{Q}$ with respect to the  Archimedean absolute value $|\cdot|$ in~$\mathbb{Q}$.

Consider the set of primes
\[
S_\alpha = \{p:\, p\mid b,\, \mbox{$p$ prime}\} \cup \{\infty \}.
\]
With help of~$S_\alpha$ we define the subring
\[
\mathbb{K}_\alpha = \prod_{p \in S_\alpha} \mathbb{Q}_p
\]
of the ad\`ele ring~$\mathbb{A}_\mathbb{Q}$ of $\mathbb{Q}$.
We equip $\mathbb{K}_\alpha$ with the measure $\mu_\alpha$ that is defined by the product measure of the Lebesgue measure $\mu_\infty$ on $\mathbb{R}$ and the Haar measures\footnote{We choose $\mu_p$ in a way that $\mu_p(\mathbb{Z}_p)=1$. In particular, this implies $\mu_p(p^\ell \mathbb{Z}_p)= p^{-\ell}.$} $\mu_p$ on $\mathbb{Q}_p$ for $p \in S_\alpha\setminus\{\infty\}$.
Let
\[
\Phi:\ \mathbb{Q} \to \mathbb{K}_\alpha, \quad z \mapsto (z, \ldots, z),
\]
be the diagonal embedding of $\mathbb{Q}$ in $\mathbb{K}_\alpha$. The set $\mathbb{Q}$ acts multiplicatively on $\mathbb{K}_\alpha$ by
\[
\xi \cdot (z_p)_{p\in S_\alpha} = (\xi z_p)_{p\in S_\alpha}.
\]
Following \cite{ST:11}, we define $\mathcal{F} = \mathcal{F}(\alpha, \mathcal{D})$ as the unique non-empty compact subset of $\mathbb{K}_\alpha$ satisfying
\[
\mathcal{F} = \bigcup_{d\in\mathcal{D}} \alpha^{-1} \cdot \big(\mathcal{F} + \Phi(d)\big).
\]
This set can be written explicitly as
\[
\mathcal{F} = \bigg\{\sum_{k=1}^\infty \Phi(\varepsilon_k \alpha^{-k}):\, \varepsilon_k \in \mathcal{D}\bigg\}.
\]
In order to keep track of the occurrences of a given digit $d \in \mathcal{D}$ in a representation in base $a/b$, we will have to deal with the subsets
\[
\mathcal{F}_d = \alpha^{-1} \cdot \big(\mathcal{F} + \Phi(d)\big)\qquad(d \in\mathcal{D})
\]
of $\mathcal{F}$. See Figure~\ref{fig:Fd32} for a representation of these sets for the case $\alpha = 3/2$.

\begin{figure}[ht]
\includegraphics{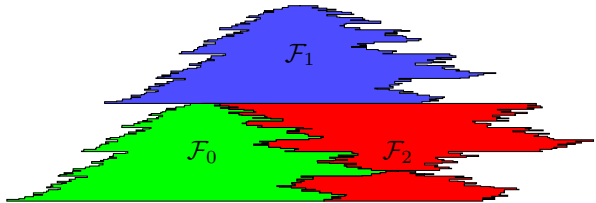}
\caption{The sets $\mathcal{F}_d$ constituting $\mathcal{F}  \in \mathbb{R} \times \mathbb{Q}_2$ for $\alpha = 3/2$.
Here, an element $\sum_{j=k}^\infty d_j \alpha^{-j}$ of $\mathbb{Q}_2$, with $d_j \in \{0,1\}$, is represented by $\sum_{j=k}^\infty d_j 2^{-j}$.}
\label{fig:Fd32}
\end{figure}

We also need to consider approximations of~$\mathcal{F}_d$. To this matter, define the ``boxes''
\[
D_r = \alpha^{-r} \cdot \bigg([0, 1] \times  \prod_{p\mid b} \mathbb{Z}_p \bigg) \qquad(r\ge 0).
\]
With help of these boxes, we define the approximations
\[
\mathcal{F}_{d,r} = \bigcup_{\varepsilon_2, \ldots, \varepsilon_r \in \mathcal{D}} \bigg(\Phi(d \alpha^{-1}) + \sum_{k=2}^r \Phi(\varepsilon_k \alpha^{-k}) + D_r\bigg) \qquad(r\ge 1)
\]
of~$\mathcal{F}_d$. (Note that the sum over $k$ is empty for $r=1$.)

Recall that a collection~$\mathcal{C}$ of compact subsets of~$\mathbb{K}_\alpha$ is a \emph{tiling} of~$\mathbb{K}_\alpha$ if each element of~$\mathcal{C}$ is the closure of its interior and if $\mu_\alpha$-almost every point of~$\mathbb{K}_\alpha$ is contained in exactly one element of~$\mathcal{C}$. It will be of importance later that $\mathcal{F}$, $\mathcal{F}_d$, and their approximations induce periodic tilings of~$\mathbb{K}_\alpha$. This is made precise in the following lemma.

\begin{lemma} \label{l:tiling}
The following collections are tilings of~$\mathbb{K}_\alpha$.
\smallskip
\begin{itemize}
\item
$\{\Phi(x) + D_r:\, x \in \alpha^{-r} \mathbb{Z}[\alpha]\} \quad (r\ge 0)$,
\smallskip
\item
$\{\Phi(x) + \mathcal{F}_{d,r}:\, x \in \mathbb{Z}[\alpha],\, d \in \mathcal{D}\} \quad (r\ge 1)$,
\smallskip
\item
$\{\Phi(x) + \mathcal{F}:\, x \in \mathbb{Z}[\alpha]\}$,
\smallskip
\item
$\{\Phi(x) + \mathcal{F}_d:\, x \in \mathbb{Z}[\alpha],\, d \in \mathcal{D}\}$.
\end{itemize}
\end{lemma}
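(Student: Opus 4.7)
The plan is to establish the four tilings in sequence, each one built upon the previous. The first is a standard fundamental-domain argument; the second invokes the algebraic structure of digit expansions in base~$\alpha$; and the last two are the tiling theorem for rational self-affine tiles from \cite{ST:11}, of which $\mathcal{F}$ is a special case.

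For the first tiling, scaling by $\alpha^r$ (an automorphism of $\mathbb{K}_\alpha$ sending $D_0$ to $D_r$ and $\mathbb{Z}[\alpha]$ to $\alpha^{-r}\mathbb{Z}[\alpha]$) reduces the claim to showing that $D_0 = [0,1] \times \prod_{p \mid b} \mathbb{Z}_p$ is a fundamental domain for $\Phi(\mathbb{Z}[1/b])$ in $\mathbb{K}_\alpha$. I would give an explicit reduction: starting from $(x_\infty, (x_p)_{p\mid b}) \in \mathbb{K}_\alpha$, first subtract $\Phi\bigl(\sum_{p \mid b} \lambda_p(x_p)\bigr)$, which moves each $p$-adic component into $\mathbb{Z}_p$ (exploiting the fact that $\lambda_p(y)$, having denominator a power of~$p$, is a $q$-adic integer for every prime $q \ne p$), and then subtract the integer part of the resulting real component. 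Uniqueness modulo $\mu_\alpha$-null sets holds because any $z \in \mathbb{Z}[1/b] \cap \bigcap_{p \mid b} \mathbb{Z}_p$ must already lie in $\mathbb{Z}$, so the only nonzero candidates have $|z| \le 1$ and their contributions concentrate on the boundary of~$D_0$, which is negligible.

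For the second tiling, I would rewrite
\[
\bigcup_{d \in \mathcal{D}} \mathcal{F}_{d,r} = \bigcup_{(\varepsilon_1, \ldots, \varepsilon_r) \in \mathcal{D}^r} \biggl(\Phi\Bigl(\sum_{k=1}^r \varepsilon_k \alpha^{-k}\Bigr) + D_r\biggr)
\]
and show that the $a^r$ sums $\sum_{k=1}^r \varepsilon_k \alpha^{-k}$ form a complete set of coset representatives of $\alpha^{-r}\mathbb{Z}[\alpha]/\mathbb{Z}[\alpha] \cong \mathbb{Z}/a^r\mathbb{Z}$. Rewriting the sum as $a^{-r}\sum_{k=1}^r \varepsilon_k b^k a^{r-k}$ and invoking $\gcd(a,b)=1$, an induction on~$r$ peels off the digits $\varepsilon_r, \varepsilon_{r-1}, \ldots$ one at a time modulo successive powers of~$a$, yielding the $a^r$ distinct representatives. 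Combining this bijection with the first tiling at level~$r$ and with the decomposition $\alpha^{-r}\mathbb{Z}[\alpha] = \mathbb{Z}[\alpha] + \bigl\{\sum_k \varepsilon_k\alpha^{-k}\bigr\}$ gives the second tiling.

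For the third and fourth tilings I would invoke the tiling theorem for rational self-affine tiles from \cite{ST:11}, which applies since $\mathcal{D} = \{0, 1, \ldots, a-1\}$ is a standard complete residue system modulo~$a$. A self-contained route is to pass to the limit $r \to \infty$ in the second tiling: as $|\alpha|_\infty > 1$ and $|\alpha|_p > 1$ for every $p \mid b$, the boxes~$D_r$ shrink to a point and $\mathcal{F}_{d,r}$ converges to $\mathcal{F}_d$ in the Hausdorff metric, with $\bigcup_d \mathcal{F}_d = \mathcal{F}$ by the self-similarity equation. The main obstacle is passing essential disjointness and coverage to the limit; this reduces to checking $\mu_\alpha(\mathcal{F}) = 1$, which matches the covolume of $\mathbb{Z}[\alpha]$ in~$\mathbb{K}_\alpha$. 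A Fubini-style integration of the covering function over a fundamental domain then forces $\sum_{x \in \mathbb{Z}[\alpha]} \mathbf{1}_{\mathcal{F}}(y - \Phi(x)) = 1$ almost everywhere, giving the tiling property for~$\mathcal{F}$; an analogous identity together with $\mu_\alpha(\mathcal{F}_d) = 1/a$ handles the fourth tiling.
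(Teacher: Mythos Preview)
Your proof is correct and follows essentially the same route as the paper: establish that $D_0$ is a fundamental domain for $\Phi(\mathbb{Z}[1/b])$ via the explicit reduction with $\lambda_p$, scale to get the $D_r$-tiling, use that the digit sums form a complete residue system for the second tiling, and invoke \cite{ST:11} for the third. One simplification worth noting: the paper obtains the fourth tiling directly from the third by multiplying by $\alpha^{-1}$ and using the coset decomposition $\alpha^{-1}\mathbb{Z}[\alpha]=\bigsqcup_{d\in\mathcal{D}}(d\alpha^{-1}+\mathbb{Z}[\alpha])$, which is cleaner than your measure-theoretic alternative; also, your scaling automorphism in the first step should send $D_r$ to $D_0$ (and $\alpha^{-r}\mathbb{Z}[\alpha]$ to $\mathbb{Z}[\alpha]$), not the reverse.
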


\begin{proof}
To show that $\{\Phi(x) + D_0:\, x \in \mathbb{Z}[\alpha]\}$ is a tiling of~$\mathbb{K}_\alpha$, consider a point $(z_p)_{p\in S_\alpha} \in \mathbb{K}_\alpha$, and let
\[
y = \sum_{p\in S_\alpha\setminus\{\infty\}} \lambda_p(z_p) + \Bigg\lfloor z_\infty - \sum_{p\in S_\alpha\setminus\{\infty\}} \lambda_p(z_p)\Bigg\rfloor.
\]
Since $\lambda_p(z_p)\in\mathbb{Z}[\alpha]$ holds for each $p\in S_\alpha\setminus\{\infty\}$, we have $y\in \mathbb{Z}[\alpha]$.
By the definition of~$y$, we conclude that $(z_p)_{p\in S_\alpha} \in \Phi(y) + D_0$, and $(z_p)_{p\in S_\alpha} \not\in \Phi(x) + D_0$ for all $x \in \mathbb{Z}[\alpha] \setminus \{y\}$ except when $z_\infty - \sum_{p\in S_\alpha\setminus\{\infty\}} \lambda_p(z_p) \in \mathbb{Z}$.
As $\sum_{p\in S_\alpha\setminus\{\infty\}} \lambda_p(z_p) \in \mathbb{Q}$, the set of points $(z_p)_{p\in S_\alpha} \in \mathbb{K}_\alpha$ with $z_\infty - \sum_{p\in S_\alpha\setminus\{\infty\}} \lambda_p(z_p) \in \mathbb{Z}$ has $\mu_\alpha$-measure zero (note that $z_\infty$ can be an arbitrary real number).
Hence, $\{\Phi(x) + D_0:\, x \in \mathbb{Z}[\alpha]\}$ is a tiling of~$\mathbb{K}_\alpha$.
Multiplying by $\alpha^{-r}$ yields that $\{\Phi(x) + D_r:\, x \in \alpha^{-r} \mathbb{Z}[\alpha]\}$ is a tiling of~$\mathbb{K}_\alpha$ for each $r\ge 0$.
Since the set $\{\varepsilon_1 \alpha^{-1} + \cdots + \varepsilon_r \alpha^{-r}:\, \varepsilon_1, \ldots, \varepsilon_r \in \mathcal{D}\}$ forms a complete residue system of $\mathbb{Z}[\alpha] / \alpha^{-r} \mathbb{Z}[\alpha]$, it follows that $\{\Phi(x) + \mathcal{F}_{d,r}:\, x \in \mathbb{Z}[\alpha],\, d \in \mathcal{D}\}$ is a tiling of~$\mathbb{K}_\alpha$ for each $r\ge 1$.

By \cite[Theorem~2]{ST:11}, the collection $\{\Phi(x) + \mathcal{F}:\, x \in \mathbb{Z}[\alpha]\}$ forms a tiling of~$\mathbb{K}_\alpha$.
Now, multiplying by $\alpha^{-1}$ yields that $\{\Phi(x) + \mathcal{F}_d:\, x \in \mathbb{Z}[\alpha],\, d \in \mathcal{D}\}$ is a tiling of~$\mathbb{K}_\alpha$.
\end{proof}

A~patch of the tiling $\{\Phi(x) + \mathcal{F}:\, x \in \mathbb{Z}[\alpha]\}$ for $\alpha = 3/2$ is depicted in Figure~\ref{fig:F32}.

\begin{figure}[ht]
\includegraphics[width=14cm]{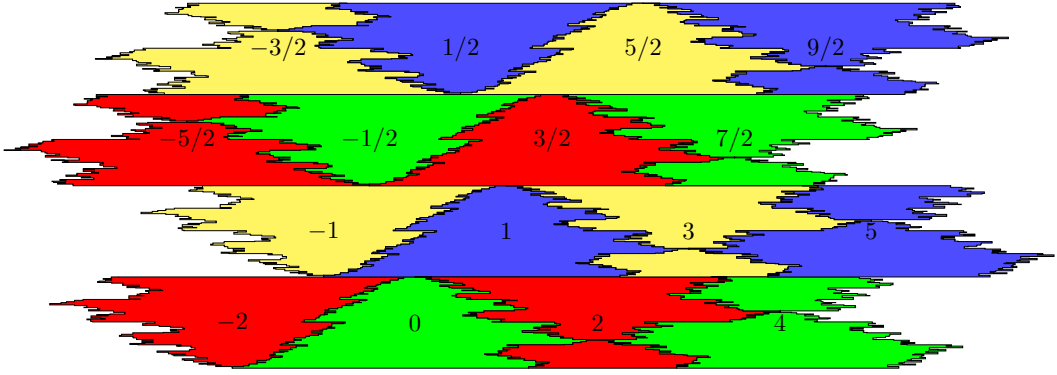}
\caption{The tiles $\Phi(x) + \mathcal{F} \in \mathbb{R} \times \mathbb{Q}_2$ for $\alpha = 3/2$, $x \in \{\frac{-5}{2},\frac{-4}{2},\ldots,\frac{10}{2}\}$.}
\label{fig:F32}
\end{figure}

For a set $M \subset \mathbb{K}_\alpha$, we denote the characteristic function of~$M$ by~$\mathbf{1}_M$. Since
\[
 \int_{\mathbb{K}_\alpha} \mathbf{1}_{D_r}\, \mathrm{d} \mu_\alpha = \alpha^{-r} \prod_{p \mid b} \mu_p(p^{v_p(b)r}\mathbb{Z}_p) = \alpha^{-r} \prod_{p \mid b}  p^{-v_p(b)r} =\alpha^{-r}b^{-r}= a^{-r},
\]
the function
\[
f_{d,r}(\mathbf{z}) = a^r \int_{D_r} \mathbf{1}_{\Phi(\mathbb{Z}[\alpha])+\mathcal{F}_{d,r}}(\mathbf{z} + \mathbf{y})\, \mathrm{d} \mu_\alpha(\mathbf{y})
\]
is an Urysohn function which approximates the characteristic function of $\mathcal{F}_d \bmod \Phi(\mathbb{Z}[\alpha])$.
It can be split up into a sum of the $a^{r-1}$ functions
\[
g_{x,r}(\mathbf{z}) = a^r \int_{D_r} \mathbf{1}_{\Phi(x+\mathbb{Z}[\alpha])+D_r}(\mathbf{z}+\mathbf{y})\, \mathrm{d}\mu_\alpha(\mathbf{y})
\]
with $x \in \{d\alpha^{-1} + \varepsilon_2 \alpha^{-2} + \cdots + \varepsilon_r \alpha^{-r}:\, \varepsilon_2,\ldots, \varepsilon_r \in\mathcal{D}\}$.

We want to expand $f_{d,r}$ into a Fourier series. Since $\{\Phi(x)+D_0: x\in \mathbb{Z}[\alpha]\}$ forms a tiling of~$\mathbb{K}_\alpha$ by Lemma~\ref{l:tiling}, and since $f_{d,r}$ and $g_{x,r}$ are periodic functions mod $\Phi(\mathbb{Z}[\alpha])$, we will do Fourier analysis on the compact fundamental domain $D_0$ of $\mathbb{K}_\alpha / \Phi(\mathbb{Z}[\alpha])$. To this matter, we set $\e(y)=\exp(2\pi i y)$ and define
the character
\[
\chi\big((z_p)_{p\in S_\alpha}\big) = \e\!\Bigg(\sum_{p\in S_\alpha\setminus\{\infty\}}\lambda_p(z_p) - z_\infty\Bigg).
\]
(For the definition of characters of the ad\`ele ring~$\mathbb{A}_\mathbb{Q}$, we refer to Weil \cite[Section~IV.2, p.~66]{Weil:73}.)
We will also use the notation $\tilde\chi(\xi) = \chi(\Phi(\xi))$. We have the following property.

\begin{lemma}\label{lemCharacter}
Let $\xi\in\mathbb{Q}$. Then $\tilde\chi(\xi)=1$ if and only if $\xi \in \mathbb{Z}[\alpha]$.
\end{lemma}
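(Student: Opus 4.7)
My plan is to unpack the character definition and reduce everything to a fact about partial fraction decompositions of rationals. From the definitions of $\chi$ and $\Phi$ we have
\[
\tilde\chi(\xi) = \e\!\Bigg(\sum_{p \in S_\alpha \setminus \{\infty\}} \lambda_p(\xi) - \xi\Bigg) = \e\!\Bigg(\sum_{p \mid b} \lambda_p(\xi) - \xi\Bigg),
\]
so the condition $\tilde\chi(\xi) = 1$ is equivalent to $\xi - \sum_{p \mid b} \lambda_p(\xi) \in \mathbb{Z}$. My job is therefore to characterize exactly when this integrality holds.

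Next I would invoke the standard partial fraction decomposition in~$\mathbb{Q}$. Writing $\xi = u/v$ in lowest terms, the Chinese remainder theorem produces $m \in \mathbb{Z}$ and, for each prime $q \mid v$, a rational $r_q \in [0,1)$ with $q$-power denominator such that $\xi = m + \sum_{q \mid v} r_q$. I would then check that $\lambda_p(\xi) = r_p$ when $p \mid v$ and $\lambda_p(\xi) = 0$ otherwise: the first identity holds because $m + \sum_{q \mid v,\, q \ne p} r_q$ is a $p$-adic integer (its denominator is coprime to~$p$), so only $r_p$ contributes to the $p$-adic fractional part; the second is immediate since $\xi \in \mathbb{Z}_p$ whenever $p \nmid v$. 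Consequently $\xi - \sum_{p \mid v} \lambda_p(\xi) = m \in \mathbb{Z}$, and substituting back yields
\[
\tilde\chi(\xi) = 1 \iff \sum_{q \mid v,\, q \nmid b} \lambda_q(\xi) \in \mathbb{Z}.
\]

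Finally I would dispose of both implications from this reformulation. If $\xi \in \mathbb{Z}[\alpha] = \mathbb{Z}[1/b]$, every prime divisor of $v$ already divides $b$, the sum is empty, and $\tilde\chi(\xi) = 1$ trivially. Conversely, if $\xi \notin \mathbb{Z}[1/b]$, I would pick any prime $q \mid v$ with $q \nmid b$; then $\lambda_q(\xi) = r_q$ has exact $q$-adic denominator $q^k$ with $k \ge 1$ and numerator coprime to~$q$, whereas every other summand $\lambda_{q'}(\xi)$ appearing in the sum (with $q' \mid v$, $q' \nmid b$, $q' \ne q$) is a $q$-adic integer. Hence the whole sum has $q$-adic valuation exactly $-k < 0$ and cannot lie in~$\mathbb{Z}$, forcing $\tilde\chi(\xi) \ne 1$.

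The argument is essentially careful bookkeeping of denominators across different primes, and I do not expect a genuine obstacle. The only point that warrants some care is the identification $\lambda_p(\xi) = r_p$, which is what legitimately bridges the ad\`elic character on the left with the elementary rational decomposition on the right; everything else is then a one-line verification using that a sum of $p$-integral rationals stays $p$-integral.
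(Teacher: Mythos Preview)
Your argument is correct. Both you and the paper reduce to the equivalence $\tilde\chi(\xi)=1 \iff \sum_{p\mid b}\lambda_p(\xi)-\xi\in\mathbb{Z}$ and then test this integrality prime by prime, so the underlying idea is the same. The difference is one of presentation: you insert the explicit partial fraction decomposition $\xi=m+\sum_{q\mid v}r_q$ to identify each $\lambda_p(\xi)$ with the elementary piece $r_p$, and then recast the condition as $\sum_{q\mid v,\,q\nmid b}r_q\in\mathbb{Z}$. The paper skips this scaffolding and argues directly: for $\xi\notin\mathbb{Z}[\alpha]$ it picks a prime $q\nmid b$ with $\xi\notin\mathbb{Z}_q$ and observes that every $\lambda_p(\xi)$ with $p\mid b$ lies in $\mathbb{Z}_q$, so the sum cannot be an integer; for $\xi\in\mathbb{Z}[\alpha]$ it checks membership in $\mathbb{Z}_q$ for every $q$ and invokes $\bigcap_q\mathbb{Z}_q=\mathbb{Z}$. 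Your route is a bit more concrete and self-contained, while the paper's is shorter and leans on the local--global characterization of~$\mathbb{Z}$; neither buys anything essential over the other.
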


\begin{proof}
 The definition of the character $\chi$ implies that $\tilde\chi(\xi)=1$ if and only if
\begin{align}\label{eq:character}
 \sum_{p\mid b} \lambda_p(\xi)-\xi \in \mathbb Z.
\end{align}
Assume now that $\xi\not\in\mathbb Z[\alpha]$. Then there exists a prime $q\nmid b$ such that $\xi\not\in\mathbb Z_q$ (to be precise, the canonical embedding of $\xi$ in~$\mathbb{Q}_q$ is not an element of~$\mathbb{Z}_q$). Since $\lambda_p(\xi)\in \mathbb Z_q$ for all $p\mid b$, we see that $\sum_{p\mid b} \lambda_p(\xi) - \xi \not\in\mathbb Z_q$. Hence,~\eqref{eq:character} cannot hold true. It remains to show that $\xi\in\mathbb Z[\alpha]$ implies~\eqref{eq:character}. Let $q$ be prime. If $q\nmid b$ then $\lambda_p(\xi)\in\mathbb Z_q$ for all $p\mid b$ and $\xi\in\mathbb Z_q$. Thus,  $\sum_{p\mid b} \lambda_p(\xi) - \xi \in\mathbb Z_q$. If $q\mid b$, then $\lambda_p(\xi)\in \mathbb Z_q$ for $p \ne q$ and $\lambda_q(\xi) - \xi \in \mathbb Z_q$. Hence,  $\sum_{p\mid b} \lambda_p(\xi) - \xi \in\mathbb Z_q$ in this case too. Thus  $\sum_{p\mid b} \lambda_p(\xi)-\xi \in \bigcap_q \mathbb{Z}_q=\mathbb{Z}$, which proves the lemma.
\end{proof}

Using \cite[Theorem~23.25]{Hewitt-Ross:63}, we see that the
Pontryagin dual $\widehat{D_0}$ of $D_0$ is isomorphic to the annihilator of $ \Phi(\mathbb{Z}[\alpha])$ in the Pontryagin dual $\widehat{\mathbb{K}_\alpha}$. Therefore, using \cite[Lemma~4.7]{ST:11},
we conclude that $\widehat{D_0} = \{\chi(\xi\,\cdot)\;:\; \xi \in \mathbb{Z}[\alpha]^*\}$ holds with
\[
\mathbb{Z}[\alpha]^* = \{\xi \in \mathbb{Q}:\, \tilde\chi(\xi x) =1 \hbox{ for all } x \in \mathbb{Z}[\alpha]\}.
\]
Lemma~\ref{lemCharacter} implies that $\mathbb{Z}[\alpha]^* = \mathbb Z[\alpha]$; compare with \cite[proof of Lemma~4.1.5]{Tate:67} and \cite[Lemma~4.6]{ST:11}. Summing up, we proved the following lemma.

\begin{lemma}\label{lemPontryagin}
The Pontryagin dual of $D_0$ is given by
\[
\widehat{D_0} = \{\chi(\xi\,\cdot)\;:\; \xi \in \mathbb{Z}[\alpha]\}.
\]
\end{lemma}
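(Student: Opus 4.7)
The plan is to identify $\widehat{D_0}$ with the annihilator of $\Phi(\mathbb{Z}[\alpha])$ inside $\widehat{\mathbb{K}_\alpha}$, parametrize this annihilator by a subset $\mathbb{Z}[\alpha]^{*}\subseteq\mathbb{Q}$, and then appeal to Lemma~\ref{lemCharacter} to show $\mathbb{Z}[\alpha]^{*}=\mathbb{Z}[\alpha]$.

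First, since Lemma~\ref{l:tiling} (the case $r=0$) shows that $D_0$ is a fundamental domain for $\mathbb{K}_\alpha/\Phi(\mathbb{Z}[\alpha])$, the Pontryagin dual $\widehat{D_0}$ is canonically isomorphic to the dual of the compact quotient group $\mathbb{K}_\alpha/\Phi(\mathbb{Z}[\alpha])$. By the duality theorem for quotient groups (Hewitt--Ross, Theorem~23.25), this dual is in turn isomorphic to the annihilator
\[
\Phi(\mathbb{Z}[\alpha])^{\perp}=\bigl\{\psi\in\widehat{\mathbb{K}_\alpha}\;:\;\psi(\Phi(x))=1\text{ for all }x\in\mathbb{Z}[\alpha]\bigr\}.
\]

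Second, one has to describe $\widehat{\mathbb{K}_\alpha}$ concretely. This is where the citation of \cite[Lemma~4.7]{ST:11} (cf.\ Tate's thesis and Weil) enters: every continuous character of $\mathbb{K}_\alpha$ is of the form $\mathbf{z}\mapsto\chi(\xi\cdot\mathbf{z})$ for a unique $\xi$ in the ``dual'' of~$\mathbb{K}_\alpha$, and, because the product $\mathbb{K}_\alpha=\mathbb{R}\times\prod_{p\mid b}\mathbb{Q}_p$ is self-dual via the fixed character~$\chi$, this parameter~$\xi$ ranges over a suitable subset of $\mathbb{Q}$. Consequently the annihilator of $\Phi(\mathbb{Z}[\alpha])$ is $\{\chi(\xi\,\cdot):\xi\in\mathbb{Z}[\alpha]^{*}\}$ with the set $\mathbb{Z}[\alpha]^{*}$ as defined in the statement preceding the lemma.

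Third, I would close the argument by showing $\mathbb{Z}[\alpha]^{*}=\mathbb{Z}[\alpha]$, which is almost immediate from Lemma~\ref{lemCharacter}. If $\xi\in\mathbb{Z}[\alpha]^{*}$, then evaluating at $x=1$ gives $\tilde\chi(\xi)=1$, so Lemma~\ref{lemCharacter} yields $\xi\in\mathbb{Z}[\alpha]$. Conversely, if $\xi\in\mathbb{Z}[\alpha]$, then $\xi x\in\mathbb{Z}[\alpha]$ for every $x\in\mathbb{Z}[\alpha]$, and another application of Lemma~\ref{lemCharacter} gives $\tilde\chi(\xi x)=1$, so $\xi\in\mathbb{Z}[\alpha]^{*}$.

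The only non-routine step is the second one, identifying $\widehat{\mathbb{K}_\alpha}$ with $\mathbb{Q}$ (embedded in $\mathbb{K}_\alpha$) via the character $\chi$; however, this is exactly the local-to-global self-duality machinery already spelled out in \cite[Lemma~4.7]{ST:11} and \cite[Lemma~4.1.5]{Tate:67}, which we may simply invoke. Everything else is a formal combination of Lemma~\ref{l:tiling}, Pontryagin duality for quotients, and Lemma~\ref{lemCharacter}.
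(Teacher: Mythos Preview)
Your proposal is correct and follows essentially the same route as the paper: identify $\widehat{D_0}$ with the annihilator of $\Phi(\mathbb{Z}[\alpha])$ in $\widehat{\mathbb{K}_\alpha}$ via Hewitt--Ross, parametrize the annihilator by $\mathbb{Z}[\alpha]^{*}\subset\mathbb{Q}$ using \cite[Lemma~4.7]{ST:11}, and then invoke Lemma~\ref{lemCharacter} to conclude $\mathbb{Z}[\alpha]^{*}=\mathbb{Z}[\alpha]$. The only difference is that you spell out both inclusions $\mathbb{Z}[\alpha]^{*}\subseteq\mathbb{Z}[\alpha]$ and $\mathbb{Z}[\alpha]\subseteq\mathbb{Z}[\alpha]^{*}$ explicitly, whereas the paper simply asserts the equality as a consequence of Lemma~\ref{lemCharacter}; your phrasing in the second step (``$\xi$ ranges over a suitable subset of~$\mathbb{Q}$'') is slightly loose, since self-duality gives $\widehat{\mathbb{K}_\alpha}\cong\mathbb{K}_\alpha$ and it is only the annihilator that lands in the diagonal image of~$\mathbb{Q}$, but this is precisely what \cite[Lemma~4.7]{ST:11} provides.
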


By Lemma~\ref{lemPontryagin}, the Fourier expansions of the functions $g_{x,r}$ and $f_{d,r}$ are of the form
\begin{equation}\label{fourierseries}
g_{x,r}(\mathbf{z}) \sim \sum_{\xi \in \mathbb{Z}[\alpha]} c_{x,r,\xi}\, \chi(\xi \cdot \mathbf{z})
\quad\hbox{and}\quad
f_{d,r}(\mathbf{z}) \sim \sum_{\xi\in\mathbb{Z}[\alpha]} c'_{d,r,\xi}\, \chi(\xi \cdot \mathbf{z}).
\end{equation}
We will show that these Fourier expansions converge pointwise to the respective function. To this matter we need the following estimates of the Fourier coefficients of $g_{x,r}$.

\begin{lemma} \label{lem3}
For $r\ge 0$ and $x \in \{d\alpha^{-1} + \varepsilon_2 \alpha^{-2} + \cdots + \varepsilon_r \alpha^{-r}:\, d,\varepsilon_2,\ldots, \varepsilon_r \in\mathcal{D}\}$ we have
\[
c_{x,r,\xi} = \begin{cases}a^{-r} & \mbox{if}\ \xi = 0, \\ \alpha^r b^{-r}\, \tilde\chi(-x \xi)\,  \frac{|1-\e(\alpha^{-r}\xi)|^2}{4\,\xi^2\pi^2} & \mbox{if}\ \xi \in \frac{\mathbb{Z}}{b^r} \setminus \{0\}, \\ 0 & \mbox{otherwise.}\end{cases}
\]
\end{lemma}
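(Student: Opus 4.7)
The plan is to substitute the definition of $g_{x,r}$ into the Fourier coefficient formula
\[
c_{x,r,\xi} \;=\; \int_{D_0} g_{x,r}(\mathbf{z})\,\overline{\chi(\xi\mathbf{z})}\,\mathrm{d}\mu_\alpha(\mathbf{z})
\]
and massage the resulting double integral by Fubini and a translation. First I would swap the order of integration and change variables $\mathbf{w}=\mathbf{z}+\mathbf{y}$ in the inner integral. The crucial observation is then that the integrand $\mathbf{1}_{\Phi(x+\mathbb{Z}[\alpha])+D_r}(\mathbf{w})\,\overline{\chi(\xi\mathbf{w})}$ is invariant under translation by $\Phi(\mathbb{Z}[\alpha])$: here one uses $\xi\in\mathbb{Z}[\alpha]$ together with Lemma~\ref{lemCharacter} to ensure $\chi(\xi\Phi(\xi'))=1$ for every $\xi'\in\mathbb{Z}[\alpha]$. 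Periodicity then lets me replace the shifted fundamental domain $D_0+\mathbf{y}$ by $D_0$ itself, reduce $\Phi(x+\mathbb{Z}[\alpha])+D_r$ inside $D_0$ to a single translate $\Phi(x)+D_r$ (which fits because $D_r\subset D_0$), and pull out the factor $\tilde\chi(-\xi x)$. The remaining double integral separates into the clean identity
\[
c_{x,r,\xi} \;=\; a^r\,\tilde\chi(-\xi x)\,\bigl|I_r(\xi)\bigr|^2,\qquad I_r(\xi)=\int_{D_r}\chi(\xi\mathbf{y})\,\mathrm{d}\mu_\alpha(\mathbf{y}).
\]
For $\xi=0$ this immediately gives $c_{x,r,0}=a^r\,\mu_\alpha(D_r)^2=a^{-r}$, matching the first case of the lemma.

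The rest of the proof reduces to evaluating $I_r(\xi)$ for $\xi\neq 0$. Writing $D_r=[0,\alpha^{-r}]\times\prod_{p\mid b}p^{rv_p(b)}\mathbb{Z}_p$ and using that $\chi$ is a product of local characters, Fubini factors $I_r(\xi)$ as
\[
I_r(\xi) \;=\; \int_0^{\alpha^{-r}}\e(-\xi y_\infty)\,\mathrm{d}y_\infty\;\cdot\;\prod_{p\mid b}\int_{p^{rv_p(b)}\mathbb{Z}_p}\e(\lambda_p(\xi y_p))\,\mathrm{d}\mu_p(y_p).
\]
The Archimedean factor is the elementary $(1-\e(-\xi\alpha^{-r}))/(2\pi i\,\xi)$. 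For each $p\mid b$, after substituting $u=p^{-rv_p(b)}y_p\in\mathbb{Z}_p$ the $p$-adic integral becomes $p^{-rv_p(b)}\int_{\mathbb{Z}_p}\e(\lambda_p(\xi p^{rv_p(b)}u))\,\mathrm{d}\mu_p(u)$. A standard character argument (translation invariance on $\mathbb{Z}_p$) shows this inner integral equals $1$ if $\xi p^{rv_p(b)}\in\mathbb{Z}_p$ and vanishes otherwise. Since $\xi\in\mathbb{Z}[\alpha]=\mathbb{Z}[1/b]$ automatically lies in $\mathbb{Z}_p$ for $p\nmid b$, the combined $p$-adic conditions are precisely $\xi b^r\in\bigcap_p\mathbb{Z}_p=\mathbb{Z}$, i.e.\ $\xi\in b^{-r}\mathbb{Z}$. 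When this holds, the $p$-adic factors collapse to $\prod_{p\mid b}p^{-rv_p(b)}=b^{-r}$, so $|I_r(\xi)|^2=b^{-2r}\,|1-\e(\alpha^{-r}\xi)|^2/(4\pi^2\xi^2)$ (using $|1-\e(-\zeta)|=|1-\e(\zeta)|$), and multiplying by $a^r$ produces the advertised prefactor $\alpha^r b^{-r}=a^r b^{-2r}$. In the remaining case $\xi\in\mathbb{Z}[\alpha]\setminus b^{-r}\mathbb{Z}$, at least one $p$-adic factor vanishes, yielding $c_{x,r,\xi}=0$.

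The main technical point to get right is the periodicity justification in the first step, namely that after the Fubini and substitution manipulations the integrand is genuinely $\Phi(\mathbb{Z}[\alpha])$-invariant and that $\Phi(x+\mathbb{Z}[\alpha])+D_r$ restricted to a fundamental domain contributes exactly one copy of $D_r$; both rely on Lemma~\ref{lemCharacter} and the containment $D_r\subset D_0$. Everything after that is routine bookkeeping of local integrals, with the only subtlety being the standard vanishing $\int_{\mathbb{Z}_p}\e(\lambda_p(\zeta u))\,\mathrm{d}\mu_p(u)=0$ whenever $\zeta\notin\mathbb{Z}_p$, which one obtains by choosing $u_0\in\mathbb{Z}_p$ with $\e(\lambda_p(\zeta u_0))\neq 1$ and invoking translation invariance of~$\mu_p$.
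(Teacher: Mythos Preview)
Your argument is correct and reaches the same formula via a slightly different (and arguably cleaner) route than the paper. The paper keeps the convolution structure of $g_{x,r}$ visible: it rewrites the Fourier coefficient as the integral of the autocorrelation $\mu_\alpha\big(D_r\cap(\mathbf{z}+D_r)\big)$ against $\chi(-\xi\mathbf{z})$ over $[-1,1]\times\prod_{p\mid b}\mathbb{Z}_p$, and then evaluates this coordinate-wise as a product $\prod_{p\in S_\alpha} I_p$, where $I_\infty$ is the Fej\'er-type integral $\int_{-\alpha^{-r}}^{\alpha^{-r}}(\alpha^{-r}-|z|)\,\e(\xi z)\,\mathrm{d}z$. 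You instead apply Fubini and the substitution $\mathbf{w}=\mathbf{z}+\mathbf{y}$ first, which collapses everything to $c_{x,r,\xi}=a^r\,\tilde\chi(-\xi x)\,|I_r(\xi)|^2$ with $I_r(\xi)=\int_{D_r}\chi(\xi\mathbf{y})\,\mathrm{d}\mu_\alpha(\mathbf{y})$; this is just the ``Fourier transform of a self-convolution is a squared modulus'' identity, and it bypasses the explicit triangular kernel. Both computations then finish identically by factoring the box integral into local pieces and using the standard vanishing of $\int_{\mathbb{Z}_p}\e(\lambda_p(\zeta u))\,\mathrm{d}\mu_p(u)$ for $\zeta\notin\mathbb{Z}_p$.

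One small imprecision worth tightening: you write that $\Phi(x+\mathbb{Z}[\alpha])+D_r$ restricted to $D_0$ reduces to the single copy $\Phi(x)+D_r$ ``which fits because $D_r\subset D_0$'', but $\Phi(x)$ itself need not lie in $D_0$ (for instance $x=d\alpha^{-1}$ can exceed $1$ in the real coordinate). The fix is exactly what the paper does in its first line: integrate over the shifted fundamental domain $\Phi(x)+D_0$ instead of $D_0$ (periodicity lets you use any fundamental domain), where $\Phi(x)+D_r$ genuinely sits inside and the other $\mathbb{Z}[\alpha]$-translates meet it only in a $\mu_\alpha$-null set. Since you already flag this step as the main technical point, this is a wording issue rather than a gap.
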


\begin{proof}
As $\Phi(x) + D_0$ is a fundamental domain of $\mathbb{K}_\alpha / \Phi(\mathbb{Z}[\alpha])$, Lemma~\ref{lemPontryagin} implies that the Fourier coefficient $c_{x,r,\xi}$ of $g_{x,r}(\mathbf{z})$ is given by
\begin{align*}
c_{x,r,\xi} & =  a^r\int_{\Phi(x)+D_0} g_{x,r}(\mathbf{z})\, \chi(-\xi \cdot \mathbf{z})\, \mathrm{d}\mu_\alpha(\mathbf{z}) \\
& = a^r \int_{\Phi(x)+D_0} \mu_\alpha\Big(\big(\Phi(x+\mathbb{Z}[\alpha]) + D_r\big) \cap \big(\mathbf{z} + D_r\big)\Big)\, \chi(-\xi \cdot \mathbf{z})\, \mathrm{d}\mu_\alpha(\mathbf{z}) \\
& = a^r\, \chi\big(-\xi \cdot \Phi(x)\big) \int_{D_0} \mu_\alpha\Big(\big(\Phi(\mathbb{Z}[\alpha]) + D_r\big) \cap \big(\mathbf{z} + D_r\big)\Big)\, \chi(-\xi \cdot \mathbf{z})\, \mathrm{d}\mu_\alpha(\mathbf{z}).
\end{align*}
Since $D_r \subseteq D_0$ and $D_0 + D_r \subseteq  D_0 \cup (\Phi(1)+D_0)$, we have
\[
\mu_\alpha\Big(\big(\Phi(\mathbb{Z}[\alpha]) + D_r\big) \cap \big(\mathbf{z} + D_r\big)\Big) = \mu_\alpha\Big(D_r \cap \big(\mathbf{z} + D_r\big)\Big) + \mu_\alpha\Big(\big(\Phi(1)+D_r \big) \cap \big(\mathbf{z} + D_r\big)\Big)
\]
for $\mathbf{z} \in D_0$.
Using the identities $\chi(-\xi \cdot (\mathbf{z} - \Phi(1))) = \chi(-\xi \cdot \mathbf{z})$ and $D_0 \cup (-\Phi(1)+D_0) = [-1,1] \times \prod_{p\mid b}\mathbb{Z}_p$, we obtain that
\[
c_{x,r,\xi} = a^r\, \tilde\chi(-x \xi) \int_{[-1,1] \times  \prod_{p \mid b} \mathbb{Z}_p} \mu_\alpha\big(D_r \cap (\mathbf{z} + D_r)\big)\, \chi(-\xi \cdot \mathbf{z})\, \mathrm{d}\mu_\alpha(\mathbf{z}).
\]
Setting
\begin{align*}
I_\infty & = \int_{-1}^1 \mu_\infty\big([0, \alpha^{-r}] \cap [z_\infty, z_\infty + \alpha^{-r}]\big) \e(\xi z_\infty)\, \mathrm{d} z_\infty\quad\hbox{and} \\
I_p & = \int_{\mathbb{Z}_p} \mu_p\big(p^{rv_p(b)} \mathbb{Z}_p \cap (z_p + p^{rv_p(b)} \mathbb{Z}_p)\big) \e\!\big(-\lambda_p(\xi z_p)\big)\, \mathrm{d} \mu_p(z_p) \qquad (p\in S_\alpha\setminus\{\infty\}),
\end{align*}
Fubini's theorem implies that
$c_{x,r,\xi} = a^r \tilde\chi(-x\xi)\prod_{p \in S_\alpha} I_p$.
We have
\[
I_\infty = \int_{-\alpha^{-r}}^{\alpha^{-r}} (\alpha^{-r} - | z_\infty|) \e(\xi z_\infty)\, \mathrm{d}z_\infty = \begin{cases}\alpha^{-2r} & \mbox{if}\ \xi = 0, \\ \frac{(1-\e(\alpha^{-r} \xi))\, (1 - \e(-\alpha^{-r} \xi))}{4\,\xi^2\pi^2} & \mbox{otherwise.}\end{cases}
\]
It remains to calculate the integrals $I_p$ for $p\mid b$. Note that
\[
 I_p = \mu_p\big(p^{r v_p(b)}\mathbb{Z}_p\big) \int_{p^{r v_p(b)} \mathbb{Z}_p} \e\!\big( - \lambda_p(\xi z_p)\big)\, \mathrm{d}\mu_p(z_p).
\]
If $v_p(\xi) \ge -rv_p(b)$, then we get
\[
\int_{p^{rv_p(b)}\mathbb{Z}_p} \e\!\big(-\lambda_p(\xi z_p)\big)\, \mathrm{d}\mu_p(z_p) =  \int_{p^{rv_p(b)}\mathbb{Z}_p} \mathrm{d}\mu_p = p^{-rv_p(b)}.
\]
If $v_p(\xi) < -rv_p(b)$, then $\ell=-v_p(\xi) -rv_p(b)$ is a positive integer and we obtain
\begin{align*}
 \int_{p^{rv_p(b)}\mathbb{Z}_p} \e\!\big(-\lambda_p(\xi z_p)\big)\, \mathrm{d}\mu_p(z_p) &= p^{v_p(\xi)}  \int_{p^{-\ell}\mathbb{Z}_p} \e\!\big(-\lambda_p(z_p)\big)\, \mathrm{d}\mu_p(z_p)\\
&= p^{v_p(\xi)} \sum_{k=0}^{p^\ell-1} \e\!\left(-\frac{k}{p^\ell}\right) = 0.
\end{align*}
Thus we have for $p\mid b$ that
\[
I_p  = \begin{cases}p^{-2rv_p(b)} & \mbox{if}\ v_p(\xi) \ge -rv_p(b), \\ 0 & \mbox{otherwise.}\end{cases}
\]
Putting everything together proves the desired result.
\end{proof}

We now state the convergence result for the Fourier series in \eqref{fourierseries}.

\begin{lemma}\label{pointwise}
For each $\mathbf{z} \in \mathbb{K}_\alpha$ we have
\begin{equation*}
g_{x,r}(\mathbf{z}) = \sum_{\xi \in \mathbb{Z}[\alpha]} c_{x,r,\xi}\, \chi(\xi \cdot \mathbf{z})
\quad\hbox{and}\quad
f_{d,r}(\mathbf{z}) = \sum_{\xi\in\mathbb{Z}[\alpha]} c'_{d,r,\xi}\, \chi(\xi \cdot \mathbf{z}),
\end{equation*}
i.e., the Fourier series of  $g_{x,r}$ and $f_{d,r}$ converge pointwise.
\end{lemma}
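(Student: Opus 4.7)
The plan is to establish absolute (hence uniform) convergence of both Fourier series by extracting a good pointwise bound on the coefficients from Lemma~\ref{lem3}, and then to identify the uniformly continuous limit function with $g_{x,r}$ (resp.\ $f_{d,r}$) via continuity and Parseval.

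First I would bound the coefficients. The factor $\alpha^r b^{-r}$ in Lemma~\ref{lem3} simplifies to $a^r/b^{2r}$, while $\alpha^{-r}b^{-r} = 1/a^r$. Using the elementary inequality $|1-\e(t)|^2 \le \min(4, 4\pi^2 t^2)$ applied with $t = \alpha^{-r}\xi$, this gives
\[
|c_{x,r,\xi}| \le \min\!\bigg(\frac{1}{a^r},\ \frac{a^r}{\pi^2 b^{2r}\xi^2}\bigg)
\]
for every nonzero $\xi \in \frac{\mathbb{Z}}{b^r}$, while $|c_{x,r,0}| = a^{-r}$. Writing $\xi = m/b^r$ with $m\in\mathbb{Z}$ and splitting the sum over $m$ at $|m| = a^r$, the first bound contributes at most $O(a^r \cdot a^{-r}) = O(1)$ from the low-frequency range, and the second contributes at most $\sum_{|m|>a^r} a^r/(\pi^2 m^2) = O(1)$ from the high-frequency range. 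Hence $\sum_{\xi \in \mathbb{Z}[\alpha]} |c_{x,r,\xi}| < \infty$ (in fact uniformly in $r$ and $x$), so the series $\sum_{\xi} c_{x,r,\xi}\chi(\xi\cdot\mathbf{z})$ converges absolutely and uniformly to a continuous function $\widetilde g_{x,r}(\mathbf{z})$ on $\mathbb{K}_\alpha$.

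Next I would verify that $g_{x,r}$ itself is continuous. In each non-Archimedean component $\mathbb{Q}_p$ ($p\mid b$), the set $D_r$ contains $p^{r v_p(b)}\mathbb{Z}_p$, so translations of $\mathbf{z}$ by an element of $\{0\}\times\prod_{p\mid b} p^{rv_p(b)}\mathbb{Z}_p$ do not alter the integrand defining $g_{x,r}(\mathbf{z})$; thus $g_{x,r}$ is locally constant in the $p$-adic directions. In the Archimedean direction, $g_{x,r}(\mathbf{z})$ is a translation of a finite measure of the intersection of two bounded intervals, which is Lipschitz in $z_\infty$. So $g_{x,r}$ is continuous on $\mathbb{K}_\alpha$.

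Finally I would identify the two continuous functions. Because $\mathbb{K}_\alpha/\Phi(\mathbb{Z}[\alpha])$ is a compact abelian group with fundamental domain $D_0$ and dual group $\widehat{D_0}$ given by Lemma~\ref{lemPontryagin}, Parseval's theorem implies that the Fourier series of $g_{x,r}$ converges to $g_{x,r}$ in $L^2(D_0,\mu_\alpha)$. Since it also converges uniformly to $\widetilde g_{x,r}$, the two limits agree $\mu_\alpha$-almost everywhere, and by continuity they agree pointwise. For $f_{d,r}$, the decomposition $f_{d,r} = \sum_{x} g_{x,r}$ over $x$ in the finite set $\{d\alpha^{-1} + \varepsilon_2\alpha^{-2}+\cdots+\varepsilon_r\alpha^{-r}:\varepsilon_2,\ldots,\varepsilon_r\in\mathcal{D}\}$ immediately transfers the pointwise identity to $f_{d,r}$. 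The only real obstacle is the coefficient bound in the first paragraph: one must be careful to get both the uniform bound $a^{-r}$ (coming from the small-$|1-\e(\cdot)|$ regime) and the quadratic decay (coming from the crude bound $|1-\e(\cdot)|\le 2$), because either bound alone is not summable.
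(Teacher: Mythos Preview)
Your proof is correct and follows essentially the same route as the paper's: use Lemma~\ref{lem3} to show $\sum_{\xi}|c_{x,r,\xi}|<\infty$ (hence uniform convergence to a continuous function), invoke Plancherel/Parseval to get $L^2$-convergence, and conclude pointwise equality by continuity of $g_{x,r}$, with $f_{d,r}$ handled as a finite sum of $g_{x,r}$'s. You are simply more explicit than the paper about the coefficient bound and about why $g_{x,r}$ is continuous, but the argument is the same.
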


\begin{proof}
We just show the assertion for $g_{x,r}$, the one for $f_{d,r}$ then follows immediately as
\[
f_{d,r}(\mathbf{z})= \sum_{x \in \{d\alpha^{-1} + \varepsilon_2 \alpha^{-2} + \cdots + \varepsilon_r \alpha^{-r}:\, \varepsilon_2,\ldots, \varepsilon_r \in\mathcal{D}\}} g_{x,r}(\mathbf{z}).
\]

Since $g_{x,r}$ is periodic, we can regard it as a function on~$D_0$. In particular, $g_{x,r}\in L^2(D_0)$. Thus Plancherel's theorem (see e.g.\ \cite[Theorem~31.18]{Hewitt-Ross:70}) implies
that $g_{x,r}$ is equal to its Fourier series $\mu_\alpha$-almost everywhere. Since $g_{x,r}$ is
continuous, the lemma is established if we show that its Fourier series converges to a continuous function. Lemma~\ref{lem3} implies that
\begin{equation}\label{ff1}
\sum_{\xi \in \mathbb{Z}[\alpha]} c_{x,r,\xi}\, \chi(\xi \cdot \mathbf{z}) = \sum_{\xi \in b^{-r}\mathbb{Z}} c_{x,r,\xi}\, \chi(\xi \cdot \mathbf{z}).
\end{equation}
Moreover, by the same lemma, for each $\varepsilon > 0$ there exists $N\in \mathbb{N}$ such that
\begin{equation}\label{ff2}
\sum_{\begin{subarray}{c} \xi \in b^{-r}\mathbb{Z} \\ |\xi| > N \end{subarray}} |c_{x,r,\xi}| < \varepsilon.
\end{equation}
Since the partial sums $\sum_{\xi \in b^{-r}\mathbb{Z}, \, |\xi| \le N} c_{x,r,\xi}\, \chi(\xi \cdot \mathbf{z})$ are obviously continuous functions, the convergence of the series in \eqref{ff1} to a continuous function follows because \eqref{ff2} implies that it converges uniformly in $\mathbf{z}$.
\end{proof}

Later we will need the following estimate for the Fourier coefficients of $f_{d,r}$.

\begin{lemma}\label{lem6}
For $r\ge 0$ and $d\in\mathcal{D}$ we have
\[
c'_{d,r,\xi} = \begin{cases}a^{-1} & \mbox{if}\ \xi = 0, \\ \mathcal{O}(\min(1,\alpha^{2r} \xi^{-2})) & \mbox{if}\ \xi \in \frac{\mathbb{Z} \setminus a \mathbb{Z}}{b^r}, \\  0 & \mbox{otherwise.}\end{cases}
\]
\end{lemma}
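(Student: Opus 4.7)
The plan is to substitute the formula from Lemma~\ref{lem3} into $c'_{d,r,\xi} = \sum_{x} c_{x,r,\xi}$ (the sum being over the $a^{r-1}$ points $x = d\alpha^{-1} + \varepsilon_2\alpha^{-2} + \cdots + \varepsilon_r\alpha^{-r}$, $\varepsilon_k\in\mathcal{D}$) and then mine the resulting character sum for cancellation. The case $\xi=0$ is a direct count giving $a^{r-1}\cdot a^{-r} = a^{-1}$, and for $\xi \notin b^{-r}\mathbb{Z}$ every summand vanishes by Lemma~\ref{lem3}. So the interesting range is $\xi = m/b^r$ with $m \in \mathbb{Z}\setminus\{0\}$, where the independence of the $\varepsilon_k$ factors the character sum as
\[
\sum_{x} \tilde\chi(-x\xi) = \tilde\chi(-d\alpha^{-1}\xi)\prod_{k=2}^{r} S_k, \qquad S_k = \sum_{\varepsilon=0}^{a-1} y_k^{\varepsilon},\quad y_k := \tilde\chi(-\alpha^{-k}\xi).
\]
Two observations drive the rest: the identity $a\alpha^{-k} = b\alpha^{-(k-1)}$ gives $y_k^{a} = y_{k-1}^{b}$, while Lemma~\ref{lemCharacter} together with $\mathbb{Z}[\alpha] = \mathbb{Z}[1/b]$ yields $y_k = 1 \iff a^k \mid m$.

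If $a \mid m$, I would argue $c'_{d,r,\xi}=0$ by picking the smallest $s\ge1$ with $a^s \nmid m$: when $s\le r$, $y_{s-1}=1$ while $y_s\ne1$, so $S_s = (1-y_{s-1}^{b})/(1-y_s) = 0$ collapses the product; when $s>r$ (so $a^r \mid m$), we have $\alpha^{-r}\xi \in \mathbb{Z}$, making the prefactor $|1-\e(\alpha^{-r}\xi)|^2$ vanish. If $a\nmid m$, then every $y_k\ne1$ ($k\ge1$) and one may write $S_k = (1-y_{k-1}^{b})/(1-y_k)$. Factoring $1-y_{k-1}^{b} = (1-y_{k-1})(1+y_{k-1}+\cdots+y_{k-1}^{b-1})$ telescopes the product to
\[
\prod_{k=2}^{r} S_k = \frac{1-y_1}{1-y_r}\prod_{k=1}^{r-1}\bigl(1+y_k+\cdots+y_k^{b-1}\bigr),
\]
with the trailing product bounded by $b^{r-1}$ in absolute value.

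A short computation using $\lambda_p(m/a^r) = 0$ for $p \mid b$ (valid since $(a,b)=1$) shows $y_r = \e(\alpha^{-r}\xi)$, so the squared Fourier kernel $|1-\e(\alpha^{-r}\xi)|^2$ cancels against $|1-y_r|$ in the denominator, leaving a single factor $|1-\e(\alpha^{-r}\xi)| \le 2$. Multiplying out,
\[
|c'_{d,r,\xi}| \;\le\; \alpha^r b^{-r}\cdot\frac{2}{4\pi^2\xi^2}\cdot 2\,b^{r-1} \;=\; \mathcal{O}(\alpha^r/\xi^2),
\]
which is \emph{a fortiori} $\mathcal{O}(\alpha^{2r}/\xi^2)$. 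The complementary uniform bound $|c'_{d,r,\xi}| \le 1$ comes for free: $f_{d,r}$ is nonnegative and bounded by $a^r\mu_\alpha(D_r) = 1$, so its Fourier coefficients with respect to the unit-mass fundamental domain $D_0$ satisfy $|c'_{d,r,\xi}| \le \|f_{d,r}\|_\infty \le 1$. Combining the two estimates gives the claimed $\mathcal{O}(\min(1,\alpha^{2r}\xi^{-2}))$.

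The main difficulty I foresee is the telescoping bookkeeping, and in particular the precise identification $y_r = \e(\alpha^{-r}\xi)$: without collapsing the squared kernel to a single factor one would only obtain $\mathcal{O}(\xi^{-1})$ decay, which is too weak. The sub-case analysis when $a\mid m$ is mildly fiddly, because the source of vanishing differs according to whether the largest power of $a$ dividing $m$ is less than $r$ or not, and one must check both sub-cases.
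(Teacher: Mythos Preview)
Your argument is correct. The vanishing when $\xi\in\frac{a\mathbb{Z}}{b^r}\setminus\{0\}$ is handled exactly as in the paper: both proofs locate the index (your $s$, the paper's $j+1$) at which $\tilde\chi(-\alpha^{-k}\xi)$ becomes a nontrivial $a$-th root of unity, making the corresponding digit sum vanish, with the residual case $a^r\mid b^r\xi$ killed by $\e(\alpha^{-r}\xi)=1$.

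Where you diverge is in the $\mathcal{O}$-estimate for $\xi\in\frac{\mathbb{Z}\setminus a\mathbb{Z}}{b^r}$. The paper does not telescope at all: it simply bounds the $a^{r-1}$ summands by triangle inequality, each having modulus $\alpha^r b^{-r}\,|1-\e(\alpha^{-r}\xi)|^2/(4\pi^2\xi^2)$, and then invokes $|1-\e(x)|\le\min(2,2\pi|x|)$ to obtain $|c'_{d,r,\xi}|\le a^{r-1}\cdot\min(a^{-r},\,\alpha^r b^{-r}\pi^{-2}\xi^{-2})=\mathcal{O}(\min(1,\alpha^{2r}\xi^{-2}))$ in one line. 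Your telescoping, together with the key identification $y_r=\e(\alpha^{-r}\xi)$ that collapses the squared kernel to a single factor, actually proves the \emph{sharper} bound $|c'_{d,r,\xi}|=\mathcal{O}(\alpha^r/\xi^2)$; you then weaken this to $\mathcal{O}(\alpha^{2r}/\xi^2)$ to match the statement. So your route is more work but buys an extra factor $\alpha^{-r}$ of decay, while the paper's route is much shorter and already sufficient for the downstream applications. Your $\mathcal{O}(1)$ bound via $\|f_{d,r}\|_\infty\le1$ is also a legitimate alternative to the paper's use of $|1-\e(x)|\le 2\pi|x|$.
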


\begin{proof}
As $c'_{d,r,\xi} = \sum_{\varepsilon_2,\ldots,\varepsilon_r\in\mathcal{D}} c_{d\alpha^{-1}+\varepsilon_2\alpha^{-2}+\cdots+\varepsilon_r\alpha^{-r},r,\xi}$,
Lemma~\ref{lem3} yields the stated formulas for $\xi = 0$ and $\xi \not\in \frac{\mathbb{Z}}{b^r}$.
Since $|1-\e(x)|\le\min(2, 2\pi |x|)$ for all $x\in\mathbb{R}$, the stated formula also holds in the case $\xi \in \frac{\mathbb{Z} \setminus a \mathbb{Z}}{b^r}$.
It remains to show that $c'_{d,r,\xi} = 0$ for $\xi \in \frac{a \mathbb{Z}}{b^r} \setminus \{0\}$.
By Lemma~\ref{lem3} we have
\begin{equation} \label{cstrich}
c'_{d,r,\xi} = \alpha^r b^{-r}\, \frac{|1-\e(\alpha^{-r}\xi)|^2}{4\,\xi^2\pi^2}\, \tilde\chi\bigg(-\frac{d \xi}{\alpha}\bigg) \sum_{\varepsilon_2,\ldots,\varepsilon_r\in\mathcal{D}} \tilde\chi\bigg(-\bigg(\frac{\varepsilon_2}{\alpha^2}+\cdots+\frac{\varepsilon_r}{\alpha^r}\bigg)\, \xi\bigg).
\end{equation}
Let $j \ge 1$ be maximal such that $a^j \mid b^r \xi$.
If $j \ge r$, then $c'_{d,r,\xi} = 0$ because $\e(\alpha^{-r}\xi) = 1$. Thus we may assume that $j<r$.
Since $-a \xi/\alpha^{j+1} \in \mathbb Z[\alpha]$ and $- \xi/\alpha^{j+1} \not\in \mathbb Z[\alpha]$, Lemma~\ref{lemCharacter} implies that $\tilde\chi(-a \xi/\alpha^{j+1}) = 1$ and $\tilde\chi(-\xi/\alpha^{j+1}) \ne 1$. This means that $\tilde\chi(-\xi/\alpha^{j+1})$ is a non-trivial $a$-th root of unity, thus
\[
\sum_{\varepsilon_{j+1}\in\mathcal{D}} \tilde\chi(-\varepsilon_{j+1} \xi/\alpha^{j+1}) = \sum_{\ell=0}^{a-1} \tilde\chi(-\xi/\alpha^{j+1})^\ell=0.
\]
Factoring the sum in \eqref{cstrich} accordingly, we obtain that $c'_{d,r,\xi} = 0$ in case $j < r$ as well.
\end{proof}

\section{Proof of the main results}\label{section:proofthm}

Using the Fourier expansions of $g_{x,r}$ and $f_{d,r}$ we will now reformulate our main theorem as an
estimation problem for character sums. We start with an easy result on the length of the representation in base $\alpha$.

\begin{lemma}\label{lem5}
Let $n>0$ be an integer whose representation in base $\alpha$ is of the form
\[
n = \frac{1}{b} \sum_{k=0}^{\ell(n)-1} \varepsilon_k(n) \alpha^k, \quad \varepsilon_k(n) \in \mathcal{D},
\]
with $\varepsilon_{\ell(n)-1}(n) \ne 0$.
Then we have
\[
\ell(n) = \log_\alpha(n) + \mathcal{O}(1).
\]
\end{lemma}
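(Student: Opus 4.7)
The statement is a two-sided size estimate, and I would attack it by bounding $bn = \sum_{k=0}^{\ell(n)-1} \varepsilon_k(n)\alpha^k$ from above and below using only the two basic constraints: $\varepsilon_k(n) \in \{0,\ldots,a-1\}$ and $\varepsilon_{\ell(n)-1}(n) \ne 0$.

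For the upper bound on $\ell(n)$, I would exploit the nonvanishing of the leading digit. Since $\varepsilon_{\ell(n)-1}(n) \ge 1$ and all digits are nonnegative, we have $bn \ge \alpha^{\ell(n)-1}$, hence $\ell(n) \le \log_\alpha(bn) + 1 = \log_\alpha n + \log_\alpha b + 1$.

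For the lower bound on $\ell(n)$, I would use the uniform upper bound on the digits together with the geometric series formula:
\[
bn \;=\; \sum_{k=0}^{\ell(n)-1} \varepsilon_k(n)\, \alpha^k \;\le\; (a-1) \sum_{k=0}^{\ell(n)-1} \alpha^k \;=\; (a-1)\,\frac{\alpha^{\ell(n)}-1}{\alpha - 1}.
\]
Since $\alpha - 1 = (a-b)/b$, rearranging gives $\alpha^{\ell(n)} \ge 1 + bn(a-b)/(b(a-1))$, so $\ell(n) \ge \log_\alpha n + C$ for an explicit constant $C$ depending only on $a$ and $b$.

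Combining the two inequalities yields $\ell(n) = \log_\alpha n + \mathcal{O}(1)$, where the implicit constant depends only on $a$ and $b$ (and not on~$n$). There is essentially no obstacle here: the entire argument is the elementary geometric-sum estimate familiar from standard base-$q$ representations, and the only mild subtlety is keeping track of the factor~$b$ coming from the normalization $n = \frac{1}{b}\sum \varepsilon_k(n)\alpha^k$, which is harmless since it contributes only to the $\mathcal{O}(1)$ term.
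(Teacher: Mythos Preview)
Your proof is correct and essentially identical to the paper's: both bound $bn$ below by $\alpha^{\ell(n)-1}$ using the nonzero leading digit and above by $(a-1)\sum_{k=0}^{\ell(n)-1}\alpha^k = \frac{b(a-1)}{a-b}(\alpha^{\ell(n)}-1)$ via the geometric series, then take logarithms.
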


\begin{proof}
 We have
\[
\frac{1}{b} \alpha^{\ell(n)-1} \le n \le \frac{1}{b} \sum_{k=0}^{\ell(n)-1} (a-1)\, \alpha^k = \frac{a-1}{a-b} (\alpha^{\ell(n)}-1).
\]
Taking the logarithm (with respect to base~$\alpha$) implies the desired result.
\end{proof}

Set $\varepsilon_k(n) = 0$ for $k \ge \ell(n)$, and
\[
S'_{k,w}(N)  = \#\big\{1\le n\le N:\, \big(\varepsilon_{k+|w|-1}(n), \ldots, \varepsilon_{k}(n)\big) = w\big\}.
\]
By Lemma~\ref{lem5} we have  $\#\{n \in \mathbb{N} \, :\, \ell(n)<k+|w|\} \ll \alpha^k$. Hence, we get
\[
S_{k,w}(N) =  S'_{k,w}(N) + \mathcal{O}(\alpha^k).
\]
We trivially have $S_{k,w}(N) \le N$ for all $k \ge 0$.
Set $L = \ell(N) \approx \log_\alpha N$, and let $M \le L/2$ be a positive integer that we choose at the end of the proof.
Then we have
\begin{align}\label{eq:8}
S_w(N) = \sum_{0\le k< L} S_{k,w}(N) & = \sum_{M \le k \le L-M} S_{k,w}(N) + \mathcal{O}(N M) \notag \\
& = \sum_{M\le k\le L-M} S'_{k,w}(N) + \mathcal{O}(N M).
\end{align}

Since
\[
\frac{b n}{\alpha^{k+1}} = \sum_{k<j<\ell(n)} \varepsilon_j(n) \alpha^{j-k-1} + \varepsilon_k(n) \alpha^{-1} + \sum_{0\le j< k} \varepsilon_j(n) \alpha^{j-k-1},
\]
we have
\[
\Phi\Big(\frac{b n}{\alpha^{k+1}}\Big) \in \{\Phi(x) + \mathcal{F}_{\varepsilon_k(n)}:\, x \in \mathbb{Z}[\alpha]\},
\]
which is the appropriate analog of \eqref{digitformula} in our setting.
By Lemma~\ref{l:tiling}, $\{\Phi(x) + \mathcal{F}_d:\, x \in \mathbb{Z}[\alpha],\, d \in \mathcal{D}\}$ forms a tiling of~$\mathbb{K}_\alpha$.
Therefore, a point $\mathbf{z} \in \mathbb{K}_\alpha$ can be in $\mathcal{F}_d \bmod \Phi(\mathbb{Z}[\alpha])$ and $\mathcal{F}_{d'} \bmod \Phi(\mathbb{Z}[\alpha])$ for distinct $d, d' \in \mathcal{D}$ only if it is on the boundary of~$\mathcal{F}_d \bmod \Phi(\mathbb{Z}[\alpha])$.
If $\Phi(b n / \alpha^{k+1})$ lies in the interior of $\mathcal{F}_d \bmod \Phi(\mathbb{Z}[\alpha])$, then we can infer that $\varepsilon_k(n) = d$.
Thus, using the approximations $\mathcal{F}_{d,r}$ instead of~$\mathcal{F}_d$, we obtain
\begin{multline*}
S'_{k,w}(N) =  \sum_{1\le n\le N} \prod_{0\le j <|w|} \mathbf{1}_{\Phi(\mathbb{Z}[\alpha])+\mathcal{F}_{w_j,r}}\bigg( \Phi\Big(\frac{b n}{\alpha^{k+j+1}}\Big) \bigg)\\
+ \mathcal{O}\bigg(\# \bigg\{ 1 \le n \le N: \Phi\Big(\frac{b n}{\alpha^{k+j+1}}\Big) \in  (\mathcal{F}_{w_j}\, \triangle\, \mathcal{F}_{w_j,r})\cup\partial \mathcal{F}_{w_j}  \bmod \Phi(\mathbb{Z}[\alpha])\ \mbox{for some $j$}  \bigg\} \bigg),
\end{multline*}
where $A\triangle B$ denotes the symmetric difference of the sets $A$ and~$B$.
Note that
\[
\mathbf{1}_{\Phi(\mathbb{Z}[\alpha])+\mathcal{F}_{d,r}}(\mathbf{z}) = f_{d,r}(\mathbf{z}) \quad \mbox{if}\ (\mathbf{z} + D_r) \cap \big(\Phi(\mathbb{Z}[\alpha]) + \partial \mathcal{F}_{d,r}\big) = \emptyset,
\]
and $(\mathbf{z} + D_r) \cap (\Phi(\mathbb{Z}[\alpha])+\partial \mathcal{F}_{d,r}) \ne \emptyset$ implies that $\mathbf{z} \in \Phi(x) + D_r$ for some $x \in \alpha^{-r} \mathbb{Z}[\alpha]$ with $(\Phi(x) + D_r) \cap (\Phi(\mathbb{Z}[\alpha])+\partial \mathcal{F}_{d,r}) \neq \emptyset$.
Therefore, we set
\[
B_{d,r} = \big\{x \in \alpha^{-r} \mathbb{Z}[\alpha]:\, \big(\Phi(x) + D_r\big) \cap \big((\mathcal{F}_d\, \triangle\, \mathcal{F}_{d,r}) \cup \partial \mathcal{F}_d \cup \partial \mathcal{F}_{d,r}\big) \neq \emptyset \big\},
\]
i.e., $\Phi(B_{d,r}) + D_r$ forms a tube containing the boundaries of $\mathcal{F}_d$ and~$\mathcal{F}_{d,r}$.
Define
\[
F_{k,r} = \#  \bigg\{ 1 \le n \le N : \Phi\Big(\frac{b n}{\alpha^{k+1}}\Big) \in \bigcup_{d\in\mathcal{D}} \bigcup_{x\in B_{d,r}} \big(\Phi(x) + D_r\big) \bmod \Phi(\mathbb{Z}[\alpha]) \bigg\}.
\]
Then we have
\begin{align}\label{eq:3}
S'_{k,w} (N) = \sum_{1\le n\le N} \prod_{0\le j <|w|} f_{w_j,r}\bigg( \Phi\Big(\frac{b n}{\alpha^{k+j+1}}\Big) \bigg) +\mathcal{O}\Bigg( \sum_{0\le j <|w|} F_{k+j,r} \Bigg).
\end{align}

Since $B_{d,r}$ contributes to our error term, we will need the following estimate on the number of its elements.

\begin{lemma}\label{lem1}
There exists a positive constant $\varrho < a$ such that
\[
\# B_{d,r} = \mathcal{O}(\varrho^r) \qquad (d \in \mathcal{D},\ r \ge 1).
\]
\end{lemma}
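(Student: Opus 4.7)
The plan is to exploit the self-affine identity $\mathcal{F}_d=\alpha^{-1}\cdot(\mathcal{F}+\Phi(d))$ together with the set equation $\mathcal{F}=\bigcup_{e\in\mathcal{D}}\alpha^{-1}\cdot(\mathcal{F}+\Phi(e))$ to describe the boundary of $\mathcal{F}$ by a graph-directed iterated function system (GIFS). By definition $\#B_{d,r}$ counts those $\alpha^{-r}$-scale boxes $\Phi(x)+D_r$, $x\in\alpha^{-r}\mathbb{Z}[\alpha]$, that meet the exceptional set $E_{d,r}=(\mathcal{F}_d\triangle\mathcal{F}_{d,r})\cup\partial\mathcal{F}_d\cup\partial\mathcal{F}_{d,r}$. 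Since each such box has diameter $\mathcal{O}(\alpha^{-r})$ in the natural metric on~$\mathbb{K}_\alpha$, it suffices to cover $E_{d,r}$ by $\mathcal{O}(\varrho^r)$ pieces of diameter $\mathcal{O}(\alpha^{-r})$ for some $\varrho<a$.

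To set this up I would introduce the neighbour set
\[
V=\{y\in\mathbb{Z}[\alpha]\setminus\{0\}:\mathcal{F}\cap(\mathcal{F}+\Phi(y))\ne\emptyset\},
\]
which is finite thanks to the compactness of $\mathcal{F}$ and the discreteness of $\Phi(\mathbb{Z}[\alpha])$ in $\mathbb{K}_\alpha$. The tiling property in Lemma~\ref{l:tiling} gives $\partial\mathcal{F}\subseteq\bigcup_{y\in V}(\mathcal{F}\cap(\mathcal{F}+\Phi(y)))$. Substituting the set equation into both factors of each intersection and sorting out the resulting digits produces the self-affine decomposition
\[
\mathcal{F}\cap(\mathcal{F}+\Phi(y))=\bigcup_{\substack{e,e'\in\mathcal{D},\ y'\in V\cup\{0\}\\ \alpha y'=\alpha y+e-e'}}\alpha^{-1}\cdot\bigl((\mathcal{F}\cap(\mathcal{F}+\Phi(y')))+\Phi(e)\bigr),
\]
which is a GIFS on the finite vertex set $V$ with contraction ratio $\alpha^{-1}$ and non-negative integer incidence matrix~$M$. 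Iterating $r$ times covers $\partial\mathcal{F}$ by $\mathcal{O}(\|M^r\|)$ pieces of diameter $\mathcal{O}(\alpha^{-r})$, and pushing through $\alpha^{-1}\cdot(\cdot+\Phi(d))$ gives the same bound for $\partial\mathcal{F}_d$. Since $\mathcal{F}_{d,r}$ is the level-$r$ approximation of~$\mathcal{F}_d$ and differs from it only in an $\alpha^{-r}$-neighbourhood of $\partial\mathcal{F}_d$, the same GIFS controls $\partial\mathcal{F}_{d,r}$ and $\mathcal{F}_d\triangle\mathcal{F}_{d,r}$. As each diameter-$\alpha^{-r}$ piece meets only $\mathcal{O}(1)$ boxes $\Phi(x)+D_r$, this yields $\#B_{d,r}\ll\|M^r\|\ll\varrho^r$ with $\varrho=\rho(M)$.

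The main obstacle is the strict inequality $\varrho<a$. This is a quantitative version of the statement that $\partial\mathcal{F}$ is a $\mu_\alpha$-nullset: if $\varrho$ equalled $a$, the boundary GIFS would reproduce itself at the same exponential rate as the $a$-fold subdivision of $\mathcal{F}$ into its self-affine sub-pieces, forcing $\mu_\alpha(\partial\mathcal{F})>0$ and contradicting the tiling $\{\Phi(x)+\mathcal{F}:x\in\mathbb{Z}[\alpha]\}$ of $\mathbb{K}_\alpha$ established in \cite[Theorem~2]{ST:11}. Equivalently, the incidence matrix restricted to the boundary neighbour set is a proper reduction of the full base-$a$ substitution and therefore has spectral radius strictly less than~$a$. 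This adelic analogue of the standard boundary estimate for self-affine number-system tiles is precisely the framework of \cite{ST:11}; invoking it supplies the required $\varrho<a$ and completes the proof.
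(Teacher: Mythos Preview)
Your approach is correct but follows a genuinely different route from the paper's. You invoke the full boundary GIFS / contact-matrix machinery: introduce the finite neighbour set~$V$, write $\partial\mathcal{F}$ as the attractor of a graph-directed system with incidence matrix~$M$, and take $\varrho=\rho(M)$, arguing $\rho(M)<a$ from $\mu_\alpha(\partial\mathcal{F})=0$ via the results of~\cite{ST:11}. The paper instead gives an elementary self-contained argument: it first reduces $B_{d,r}$ to an analogous set $B'_{r-1}$ for~$\mathcal{F}$, then uses non-emptiness of the interior of~$\mathcal{F}$ to locate a single level-$k$ box $\Phi(x)+D_k$ contained in the interior of $\mathcal{F}\cap\mathcal{F}'_k$; this forces $\#(B'_k\bmod\mathbb{Z}[\alpha])\le a^k-1$, and the self-affine structure propagates this to $\#(B'_{nk}\bmod\mathbb{Z}[\alpha])\le(a^k-1)^n$, giving $\varrho=(a^k-1)^{1/k}<a$. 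Your route yields the sharp constant (indeed the paper remarks immediately after the proof that the optimal~$\varrho$ is the spectral radius of the contact matrix, citing \cite[Section~4]{ST:11}), whereas the paper's argument is cruder but needs no external input beyond the tiling lemma. One place where your write-up is thinner than it should be is the claim that ``the same GIFS controls $\partial\mathcal{F}_{d,r}$ and $\mathcal{F}_d\triangle\mathcal{F}_{d,r}$'': the symmetric difference lies in an $\mathcal{O}(\alpha^{-r})$-neighbourhood of $\partial\mathcal{F}_d\cup\partial\mathcal{F}_{d,r}$ by a Hausdorff-distance argument, and $\partial\mathcal{F}_{d,r}$ in turn lies in such a neighbourhood of $\partial\mathcal{F}_d$ because both families tile with the same translation set (Lemma~\ref{l:tiling}); these steps deserve a sentence each. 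With that filled in, your argument goes through.
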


\begin{proof}
First set $\mathcal{F}'_r = \bigcup_{d\in\mathcal{D}} \mathcal{F}_{d,r}$ and
\[
B'_r = \big\{x \in \alpha^{-r} \mathbb{Z}[\alpha]:\, \big(\Phi(x) + D_r\big) \cap \big((\mathcal{F}\, \triangle\, \mathcal{F}'_r) \cup \partial \mathcal{F} \cup \partial \mathcal{F}'_r\big) \neq \emptyset \big\} \qquad (r \ge 0).
\]
Since $\mathcal{F}=\alpha\mathcal{F}_d-d$, we have $B_{d,r} = \alpha B'_{r-1} - d$ for all $d \in \mathcal{D}$, $r \ge 1$, and, hence, $\# B_{d,r} = \# B'_{r-1}$. Therefore, it suffices to consider~$\# B'_r$.

By Lemma~\ref{l:tiling}, $\{\Phi(x) + \mathcal{F}:\, x \in \mathbb{Z}[\alpha]\}$ forms a tiling of~$\mathbb{K}_\alpha$, in particular $\mathcal{F}$ has non-empty interior.
As $\mathcal{F}'_r$ approximates~$\mathcal{F}$ and the diameter of~$D_r$ decreases as $r \to \infty$,  there exists some~$k$ and some $x \in \alpha^{-k} \mathbb{Z}[\alpha]$ such that $\Phi(x) + D_k$ lies in the interior of $\mathcal{F} \cap \mathcal{F}'_k$.
This implies that
\begin{equation}\label{inters}
\big(\Phi(x + \mathbb{Z}[\alpha]) + D_k\big) \cap \big((\mathcal{F}\, \triangle\, \mathcal{F}'_k) \cup \partial \mathcal{F} \cup \partial \mathcal{F}'_k\big) = \emptyset,
\end{equation}
hence $B'_k \bmod \mathbb{Z}[\alpha]$ contains at most $a^k-1$ elements.
Mod $\Phi(\mathbb{Z}[\alpha])$, we have $\alpha^k \cdot (\mathcal{F}\, \triangle\, \mathcal{F}'_{2k}) \subseteq \mathcal{F}\, \triangle\, \mathcal{F}'_k$, $\alpha^k \cdot \partial \mathcal{F} \subseteq \partial \mathcal{F}$, and $\alpha^k \cdot \partial \mathcal{F}'_{2k} \subseteq \partial \mathcal{F}'_k$, thus \eqref{inters} implies that
\[
(\Phi(x + \mathbb{Z}[\alpha]) + D_k) \cap \alpha^k \cdot ((\mathcal{F}\, \triangle\, \mathcal{F}'_{2k}) \cup \partial \mathcal{F} \cup \partial \mathcal{F}'_{2k}) = \emptyset.
\]
Therefore we have, for each $y \in \alpha^{-k} \mathbb{Z}[\alpha]$,
\[
\big(\Phi(y + \alpha^{-k} x + \mathbb{Z}[\alpha]) + D_{2k}\big) \cap \big((\mathcal{F}\, \triangle\, \mathcal{F}'_{2k}) \cup \partial \mathcal{F} \cup \partial \mathcal{F}'_{2k}\big) = \emptyset,
\]
hence $B'_{2k} \bmod \mathbb{Z}[\alpha]$ contains at most $(a^k-1)^2$ elements.
Inductively, we obtain that $B'_{nk} \bmod \mathbb{Z}[\alpha]$ contains at most $(a^k-1)^n$ elements, thus $\# (B'_r \bmod \mathbb{Z}[\alpha]) = \mathcal{O}(\varrho^r)$ with $\rho = (a^k-1)^{1/k}$.
As $\mathcal{F}$ is compact, this yields that $\# B'_r = \mathcal{O}(\varrho^r)$.
\end{proof}

Note that the optimal value for $\varrho$ in Lemma~\ref{lem1} is the spectral radius of the contact matrix; see \cite[Section~4]{ST:11}.

We proceed with an estimate of the cardinalities $F_{k,r}$ occurring in the error term in \eqref{eq:3}.

\begin{lemma}\label{lem2}
For $N \ge b^r$, we have
\[
F_{k,r} \ll N \varrho^r a^{-r} + N (a \varrho)^r \alpha^{-k} + \alpha^k\, (b \varrho)^r,
\]
where $\varrho < a$ is the same constant as in Lemma~\ref{lem1}.
\end{lemma}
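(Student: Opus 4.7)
The plan is to bound $F_{k,r}$ by Fourier analysis on the compact fundamental domain $D_0$ of $\mathbb{K}_\alpha/\Phi(\mathbb{Z}[\alpha])$. Writing the tubular neighborhood as $T=\bigcup_{d\in\mathcal{D}}\bigcup_{x\in B_{d,r}}(\Phi(x)+D_r)$, so that $F_{k,r}=\#\{n\le N:\Phi(bn/\alpha^{k+1})\in T\bmod\Phi(\mathbb{Z}[\alpha])\}$, I would first replace the indicator of $T$ by a smooth Urysohn-type majorant built from the functions $g_{x,r-1}$ of Lemma~\ref{lem3}. A direct computation of the overlap $(\mathbf{z}+D_{r-1})\cap(\Phi(x+\mathbb{Z}[\alpha])+D_{r-1})$, using $D_r\subseteq D_{r-1}$, shows that $g_{x,r-1}(\mathbf{z})\ge 1-\alpha^{-1}$ on each tile $\Phi(x)+D_r\bmod\Phi(\mathbb{Z}[\alpha])$, so
\[
F_{k,r}\;\le\;\frac{1}{1-\alpha^{-1}}\sum_{d\in\mathcal{D}}\sum_{x\in B_{d,r}}\sum_{n=1}^N g_{x,r-1}\bigl(\Phi(bn/\alpha^{k+1})\bigr).
\]

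Next I would expand each $g_{x,r-1}$ by the pointwise Fourier series of Lemma~\ref{pointwise} and exchange the summations over $n$ and $\xi$. The zeroth coefficient $c_{x,r-1,0}=a^{-(r-1)}$ together with the cardinality bound $\#B_{d,r}\ll\varrho^r$ from Lemma~\ref{lem1} produces the main contribution $\mathcal{O}(N\varrho^r a^{-r})$. For $\xi\ne 0$, Lemma~\ref{lem3} restricts $\xi=m/b^{r-1}$ with $m\in\mathbb{Z}\setminus\{0\}$ and yields $|c_{x,r-1,\xi}|\ll\min(a^{-(r-1)},\,a^{r-1}/m^2)$. The inner character sum is geometric, since $\tilde\chi(n\eta)=\tilde\chi(\eta)^n$ with $\eta=mb^{k+3-r}/a^{k+1}$; repeating the partial-fraction argument of Lemma~\ref{lemCharacter} identifies $\tilde\chi(\eta)=e(-\eta_a)$, where the ``$a$-primes part'' $\eta_a$ lies in $a^{-(k+1)}\mathbb{Z}/\mathbb{Z}$ and, because $\gcd(a,b)=1$, depends bijectively on $m\bmod a^{k+1}$. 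Hence $\eta_a=0$ iff $a^{k+1}\mid m$, $\|\eta_a\|\ge a^{-(k+1)}$ otherwise, and $\bigl|\sum_{n=1}^N\tilde\chi(\xi bn/\alpha^{k+1})\bigr|\ll\min(N,\|\eta_a\|^{-1})$.

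To finish, I would dyadically estimate
\[
\varrho^r\sum_{m\ne 0}\min\bigl(a^{-(r-1)},\,a^{r-1}/m^2\bigr)\cdot\min(N,\|\eta_a\|^{-1}),
\]
split according to whether $a^{k+1}\mid m$. In the first regime $\eta_a=0$, the inner character sum equals $N$, but summing the Fourier coefficient over the nonzero multiples $m=a^{k+1}\ell$ yields only $\mathcal{O}(a^{-(k+1)})$, producing a contribution of size $\mathcal{O}(N(a\varrho)^r\alpha^{-k})$. In the second regime one has $\|\eta_a\|^{-1}\le a^{k+1}$; using the periodicity of $\eta_a$ modulo $a^{k+1}$ together with the elementary estimate $\sum_{j=1}^{a^{k+1}-1}a^{k+1}/\min(j,a^{k+1}-j)\ll a^{k+1}\log a^{k+1}$ and the hypothesis $N\ge b^r$ (which suffices to absorb the logarithmic factor), one obtains the third error $\mathcal{O}(\alpha^k(b\varrho)^r)$.

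The main obstacle lies in the last step: one must carefully track the cross-over of $|c_{x,r-1,\xi}|$ between its two regimes at $|m|\sim a^{r-1}$ in tandem with the period-$a^{k+1}$ behavior of $\|\eta_a\|$, and verify that the resulting contributions align with precisely the two error terms $N(a\varrho)^r\alpha^{-k}$ and $\alpha^k(b\varrho)^r$ on top of the main term.
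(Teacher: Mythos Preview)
Your overall strategy---decompose $F_{k,r}$ over $B_{d,r}$, majorize the indicator by an Urysohn function, Fourier expand via Lemma~\ref{pointwise}, and bound the resulting geometric sum---is the same as the paper's. Your majorant $(1-\alpha^{-1})^{-1}g_{x,r-1}$ is a legitimate alternative to the paper's choice $g_{x,r}+g_{x+\alpha^{-r},r}$, and your treatment of the zero mode and of the first regime $a^{k+1}\mid m$ is fine.

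The substantive divergence is in the character sum. The paper does \emph{not} compute $\tilde\chi$ by a partial-fraction decomposition. Instead it assumes w.l.o.g.\ that $b^{r-1}\mid N$, writes $n=n'b^{r-1}+m$ with $0\le n'<N/b^{r-1}$ and $1\le m\le b^{r-1}$, and uses that $\lambda_p(\xi n'/\alpha^{k+1})=0$ for every $p\mid b$. This reduces the inner sum to a pure real exponential sum $\sum_{n'}\e(-\xi n'/\alpha^{k+1})$ of length $N/b^{r-1}$, bounded by $\min(N/b^{r-1},\|\xi/\alpha^{k+1}\|^{-1})$, with the sum over $m$ contributing only a factor~$b^{r-1}$. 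The point is that $\|\xi/\alpha^{k+1}\|$ is now elementary: for $0<|\xi|\le\alpha^k$ one has $|\xi/\alpha^{k+1}|\le 1/\alpha$, hence $\|\xi/\alpha^{k+1}\|=|\xi|/\alpha^{k+1}\ge\alpha^{-(k+1)}$, and together with $\sum_\xi|c^*_{x,r,\xi/b^r}|=\mathcal{O}(1)$ this yields the term $\alpha^k b^r$ immediately. The tail $|\xi|>\alpha^k$ is handled with the trivial bound $N/b^{r-1}$ and the quadratic decay of the Fourier coefficients.

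Your route via $\eta_a$ replaces $\|\xi/\alpha^{k+1}\|$ by $\|mc/a^{k+1}\|$ with $c\equiv b^{k+3-r}\pmod{a^{k+1}}$, and this is where the sketch becomes problematic. The periodicity estimate $\sum_{j=1}^{a^{k+1}-1}\|j/a^{k+1}\|^{-1}\ll a^{k+1}\log a^{k+1}$ controls the sum over a \emph{full} period, but the Fourier weights $\min(a^{-(r-1)},a^{r-1}/m^2)$ are concentrated on $|m|\lesssim a^{r-1}$, which in the principal range $k\ge r$ is far shorter than one period~$a^{k+1}$. On that short range the residues $mc\bmod a^{k+1}$ form an arithmetic progression with step~$c$, and without using the specific size of $c=b^{k+3-r}$ you cannot bound $\sum_{|m|\le a^{r-1}}\|\eta_a\|^{-1}$ by anything better than $a^{r-1}\cdot a^{k+1}$; multiplying by $a^{-(r-1)}\varrho^r$ gives $\varrho^r a^{k+1}$, which exceeds $\alpha^k(b\varrho)^r$ by a factor $a\,b^{k-r}$ and is therefore too large whenever $b\ge2$ and $k>r$. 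I also do not see how the hypothesis $N\ge b^r$ enters to ``absorb the logarithmic factor'' $\log a^{k+1}\asymp k$ in this regime, since $N$ does not appear once you have taken the $\|\eta_a\|^{-1}$ branch of the minimum. The argument can likely be repaired by exploiting explicitly that $c=b^{k+3-r}$ so that $\|\eta_a\|^{-1}=a^{k+1}/(|m|\,b^{k+3-r})$ before wraparound, but this is additional work not indicated in your sketch; the paper's substitution $n=n'b^{r-1}+m$ is precisely the device that bypasses this difficulty.
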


\begin{proof}
Setting
\[
F_{x,k,r} = \bigg\{ 1 \le n \le N : \Phi\Big(\frac{b n}{\alpha^{k+1}}\Big) \in \Phi(x) + D_r \bmod \Phi(\mathbb{Z}[\alpha]) \bigg\},
\]
we can write
\begin{align}\label{squares:fm:local3}
F_{k,r} \le \sum_{d\in\mathcal{D}} \sum_{x\in B_{d,r}} F_{x,k,r}.
\end{align}
By Lemma~\ref{lem1}, the number of summands in \eqref{squares:fm:local3} is $\mathcal{O}(\varrho^r)$.
In what follows we show that
\begin{equation}\label{eq:2}
F_{x,k,r} \ll N a^{-r} + N a^r \alpha^{-k} + \alpha^k  b^r,
\end{equation}
which then implies the desired result.

W.l.o.g., we can assume that $b^{r-1} \mid N$ (if this does not hold, consider $\lceil N/b^{r-1} \rceil\, b^{r-1} \le 2 N$ instead).
Since $\mathbf{z} \in \Phi(x) + D_r$ implies
\[
\mathbf{z} + D_r \subseteq  (\Phi(x) + D_r) \cup (\Phi(x + \alpha^{-r}) + D_r),
\]
we have
\begin{align*}
\mathbf{1}_{\Phi(x+\mathbb{Z}[\alpha])+D_r}(\mathbf{z})&\le a^r \int_{D_r} 
(
\mathbf{1}_{\Phi(x+\mathbb{Z}[\alpha])+D_r}(\mathbf{z}+\mathbf{y})\,
+ \mathbf{1}_{\Phi(x+\alpha^{-r}+\mathbb{Z}[\alpha])+D_r}(\mathbf{z}+\mathbf{y})
)\,
\mathrm{d}\mu_\alpha(\mathbf{y})\\
&=g_{x,r}(\mathbf{z})+ g_{x+\alpha^{-r},r}(\mathbf{z}).
\end{align*}
Thus we get
\begin{align*}
F_{x,k,r} &= \sum_{1\le n\le N} \mathbf{1}_{\Phi(x+\mathbb{Z}[\alpha])+D_r} \bigg(\Phi\Big(\frac{b n}{\alpha^{k+1}}\Big)\bigg)\\
&\le \sum_{1\le n\le N} g_{x,r}\bigg(\Phi\Big(\frac{b n}{\alpha^{k+1}}\Big)\bigg)+g_{x+\alpha^{-r},r}\bigg(\Phi\Big(\frac{b n}{\alpha^{k+1}}\Big)\bigg).
\end{align*}
Setting $c^*_{x,r,\xi}= c_{x,r,\xi}+c_{x+\alpha^{-r},r,\xi}$ we have 
$g_{x,r}(\mathbf{z})+ g_{x+\alpha^{-r},r}(\mathbf{z}) = \sum_{\xi\in\mathbb{Z}[\alpha]} c^*_{x,r,\xi}\, \chi(\xi \cdot \mathbf{z})$. Using $|1-\e(x)|\le\min(2, 2\pi |x|)$, Lemma~\ref{lem3} yields
\begin{equation}\label{sun}
c^*_{x,r,\xi} = \begin{cases} \mathcal{O}\big(\min(a^{-r},\alpha^r b^{-r} \xi^{-2})\big) & \mbox{if}\ \xi \in \frac{\mathbb{Z}}{b^r}, \\ 0 & \mbox{otherwise.}\end{cases}
\end{equation}
We can write
\begin{align*}
F_{x,k,r} & \le \sum_{1\le n\le N} \sum_{\xi\in\frac{\mathbb{Z}}{b^r}} c^*_{x,r,\xi}\, \chi\bigg(\xi \cdot \Phi\Big(\frac{b n}{\alpha^{k+1}}\Big)\bigg) \\
& = \sum_{0\le n<N/b^{r-1}} \sum_{1\le m\le b^{r-1}} \sum_{\xi\in\mathbb{Z}} c^*_{x,r,\xi/b^r}\, \tilde\chi\bigg( \frac{\xi\, (nb^{r-1}+m)}{b^{r-1} \alpha^{k+1}}\bigg) \\
& =  \sum_{1\le m\le b^{r-1}} \sum_{\xi\in\mathbb{Z}} c^*_{x,r,\xi/b^r}\, \tilde\chi\bigg( \frac{\xi m}{b^{r-1} \alpha^{k+1}} \bigg) \sum_{0\le n< N/b^{r-1}} \e\!\bigg(-\frac{\xi}{\alpha^{k+1}}\, n\bigg),
\end{align*}
where we have used that $\lambda_p\big(\frac{n \xi}{\alpha^{k+1}}\big) = 0$ for each $p\in S_\alpha\setminus\{\infty\}$.
Thus we get
\begin{equation}\label{eq:1}
\begin{array}{rl}
\displaystyle F_{x,k,r} & \displaystyle  \ll \sum_{1\le m \le b^{r-1}} \sum_{\xi\in\mathbb{Z}} \big|c^*_{x,r,\xi/b^r}\big|\, \Bigg|\sum_{0\le n< N/b^{r-1}}  \e\!\bigg(-\frac{\xi}{\alpha^{k+1}}\, n\bigg)\Bigg|  \\
& \displaystyle \ll b^r \sum_{\xi\in\mathbb{Z}} \big|c^*_{x,r,\xi/b^r}\big|\, \min\!\bigg( \frac{N}{b^{r-1}}, \bigg\|\frac{\xi}{\alpha^{k+1}}\bigg\|^{-1} \bigg).
\end{array}
\end{equation}
If $\xi = 0$, then
\begin{equation}\label{eq:4}
b^r\ \big|c^*_{x,r,\xi/b^r}\big|\, \min\!\bigg( \frac{N}{b^{r-1}}, \bigg\|\frac{\xi}{\alpha^{k+1}}\bigg\|^{-1} \bigg) \ll b^r a^{-r}\, \frac{N}{b^r} = N a^{-r}.
\end{equation}
If $0 < |\xi| \le \alpha^k$, then
\[
\bigg\|\frac{\xi}{\alpha^{k+1}}\bigg\|^{-1} \le \max\!\bigg( \bigg|\frac{1}{\alpha^{k+1}}\bigg|^{-1}, \bigg\|\frac{1}{\alpha}\bigg\|^{-1} \bigg) \ll \alpha^k.
\]
The estimates in \eqref{sun} yield that
\begin{align*}
\sum_{0\le|\xi|\le a^r} \big|c^*_{x,r,\xi/b^r}\big| = \mathcal{O}(1) \quad \mbox{and} \ \sum_{|\xi|>a^r} \big|c^*_{x,r,\xi/b^r}\big| \ll \sum_{\xi>a^r} \alpha^r b^{-r} \frac{b^{2r}}{\xi^2} = \mathcal{O}(1),
\end{align*}
where we have used that\footnote{We will use this inequality several times in this work without explicitly saying so.} $\sum_{\xi>u} \frac{1}{\xi^2} \le \int_{\lfloor u\rfloor}^\infty \frac{1}{\xi^2} \mathrm{d} \xi = \frac{1}{\lfloor u\rfloor}$.
Thus we get
\begin{equation}\label{eq:6}
b^r \sum_{0<|\xi|\le\alpha^k} \big|c^*_{x,r,\xi/b^r}\big|\, \min\!\bigg( \frac{N}{b^{r-1}}, \bigg\|\frac{\xi}{\alpha^{k+1}}\bigg\|^{-1} \bigg) \ll \alpha^k\, b^r.
\end{equation}
Moreover, if $|\xi|>\alpha^k$ we have (using~\ref{sun})
\begin{equation}\label{eq:7}
b^r \sum_{|\xi|>\alpha^k} \big|c^*_{x,r,\xi/b^r}\big|\, \min\!\bigg( \frac{N}{b^{r-1}}, \bigg\|\frac{\xi}{\alpha^{k+1}}\bigg\|^{-1} \bigg) \ll N \sum_{|\xi|>\alpha^k} \alpha^r b^{-r}\, \frac{b^{2r}}{\xi^2} \ll N a^r \alpha^{-k}.
\end{equation}
Equations~\eqref{eq:4},~\eqref{eq:6}, and~\eqref{eq:7} together with~\eqref{eq:1} finally yield~\eqref{eq:2}.
\end{proof}

We are now in a position to prove our main theorem.

\begin{proof}[Proof of the main theorem]
By~\eqref{eq:8} we have
\begin{equation}\label{SwEq}
 S_w(N) = \sum_{M\le k\le L-M} S'_{k,w}(N) + \mathcal{O}(N M).
\end{equation}
Setting
\[
t_{w,k,r}(n) = \prod_{0\le j <|w|} f_{w_j,r}\bigg( \Phi\Big(\frac{b n}{\alpha^{k+j+1}}\Big) \bigg),
\]
and inserting~\eqref{eq:3} in \eqref{SwEq}, we derive
\begin{equation}\label{SwEq2}
S_w(N) = \sum_{M\le k\le L-M} \sum_{1\le n\le N} t_{w,k,r}(n) + \mathcal{O}\left(N M +  \sum_{M\le k\le L-M} \sum_{0\le j <|w|} F_{k+j,r}\right),
\end{equation}
 where the constants $M$ and $r$  will be chosen at the end of the proof. Using Lemma~\ref{lem2} ($\varrho$ is defined in Lemma~\ref{lem1}), we get
\begin{align*}
  \sum_{M\le k\le L-M} \sum_{0\le j <|w|} F_{k+j,r} &\ll  \sum_{M\le k\le L-M} (N \varrho^r a^{-r} + N (a \varrho)^r \alpha^{-k} + \alpha^k\, (b \varrho)^r)\\
&\ll LN\varrho^r a^{-r} + N (a \varrho)^r\alpha^{-M}+ \alpha^{L-M}\, (b \varrho)^r.
\end{align*}
Since $L\ll \log_{\alpha} N$ and $a>b$, we have $\alpha^{L-M}\, (b \varrho)^r \ll N (a \varrho)^r\alpha^{-M}$. Thus we may write \eqref{SwEq2} as
\begin{align}\label{eq:13}
S_w(N)  = \sum_{M\le k\le L-M} \sum_{1\le n\le N} t_{w,k,r}(n) + N\, \mathcal{O}\big(L\, \varrho^r a^{-r} + (a \varrho)^r \alpha^{-M} +  M\big).
\end{align}
Using the Fourier expansion of $f_{d,r}$, we have
\[
t_{w,k,r}(n) =  \sum_{(\xi_0,\ldots,\xi_{|w|-1})\in\mathbb{Z}^{|w|}} T_{(\xi_0,\ldots,\xi_{|w|-1})}\, \chi\Bigg( \sum_{0\le j <|w|} \frac{\xi_j}{b^r} \cdot \Phi\Big(\frac{b n}{\alpha^{k+j+1}}\Big)\Bigg),
\]
where
\[
T_{(\xi_0,\ldots,\xi_{|w|-1})} = \prod_{0\le j <|w|} c'_{w_j,r,\xi_j/b^r}.
\]
Lemma~\ref{lem6} now implies
\[
T_{(0,\ldots,0)}= a^{-|w|}
\]
and
\begin{equation}\label{star11}
\sum_{\xi_j\in\mathbb{Z}} \big|c'_{w_j,r,\xi_j/b^r}\big| =  \sum_{0\le |\xi_j|\le a^r} \big|c'_{w_j,r,\xi_j/b^r}\big| +\sum_{|\xi_j|> a^r} \big|c'_{w_j,r,\xi_j/b^r}\big| \ll a^r + \sum_{|\xi|>a^r} \alpha^{2r} \frac{b^{2r}}{\xi^2} \ll a^r
\end{equation}
for each $0\le j <|w|$. Thus we obtain
\begin{align}\label{eq:12}
\sum_{(\xi_0,\ldots,\xi_{|w|-1})\in\mathbb{Z}^{|w|}} \big|T_{(\xi_0,\ldots,\xi_{|w|-1})}\big| = \prod_{0\le j < |w|} \sum_{\xi_j\in\mathbb{Z}} \big|c'_{w_j,r,\xi_j/b^r}\big| \ll a^{|w|r}.
\end{align}
Let $E(k,r,N)$ be defined by
\[
E(k,r,N) = \sum_{(\xi_0,\ldots,\xi_{|w|-1})\in\mathbb{Z}^{|w|}\setminus\{\mathbf{0}\}} T_{(\xi_0,\ldots,\xi_{|w|-1})} \sum_{1\le n\le N} \tilde\chi\Bigg( \sum_{0\le j<|w|} \frac{\xi_j n}{b^{r-1} \alpha^{k+j+1}} \Bigg).
\]
Then we have
\[
\sum_{1\le n\le N}  t_{w,k,r}(n) = \frac{N}{a^{|w|}} + E(k,r,N).
\]
Next we bound the error term $E(k,r,N)$.
For the considerations that follow we replace $N$ by a number that is divisible by  $b^{r-1}$.
Let $N'$ and $N''$ be integers such that $N = N' + N''$, $b^{r-1} \mid N'$ and $0 \le N'' < b^{r-1}$.
Equation~\eqref{eq:12} implies
\[
E(k,r,N) = E(k,r,N') + \mathcal{O}(a^{|w|r} b^r),
\]
and we get
\begin{equation}\label{eq:9}
\sum_{1\le n\le N}  t_{w,k,r}(n) = \frac{N}{a^{|w|}} + E(k,r,N') + \mathcal{O}(a^{|w|r} b^r).
\end{equation}
The expression $E(k,r,N')$ satisfies
\begin{align*}
E(k,r,N')&  = \sum_{(\xi_0,\ldots,\xi_{|w|-1})\in\mathbb{Z}^{|w|}\setminus\{\mathbf{0}\}} T_{(\xi_0,\ldots,\xi_{|w|-1})} \sum_{1\le n\le N'} \tilde\chi\Bigg( \sum_{0\le j<|w|} \frac{\xi_j n}{b^{r-1} \alpha^{k+j+1}} \Bigg) \\
& \hspace{-3em} = \hspace{-1em} \sum_{0\le n<N'/b^{r-1}} \sum_{1\le m\le b^{r-1}} \sum_{(\xi_0,\ldots,\xi_{|w|-1})\in\mathbb{Z}^{|w|}\setminus\{\mathbf{0}\}} \hspace{-1em} T_{(\xi_0,\ldots,\xi_{|w|-1})}\, \tilde\chi\Bigg( \sum_{0\le j<|w|} \frac{\xi_j (b^{r-1}n+m)}{b^{r-1} \alpha^{k+j+1}} \Bigg)\\
&\hspace{-6em} = \hspace{-1em} \sum_{1\le m\le b^{r-1}} \hspace{-.5em} \tilde\chi\Bigg( \sum_{0\le j<|w|} \frac{\xi_j m}{b^{r-1} \alpha^{k+j+1}} \Bigg) \hspace{-.4em} \sum_{(\xi_0,\ldots,\xi_{|w|-1})\in\mathbb{Z}^{|w|}\setminus\{\mathbf{0}\}} \hspace{-2.5em} T_{(\xi_0,\ldots,\xi_{|w|-1})} \hspace{-.6em} \sum_{0\le n<N'/b^{r-1}} \hspace{-.6em} \e\!\Bigg(\!\! -\hspace{-.5em}\sum_{0\le j<|w|} \frac{\xi_j}{\alpha^{k+j+1}}\, n \Bigg),
\end{align*}
and we obtain
\begin{align}
E(k,r,N')
\ll b^r \hspace{-2em} \sum_{(\xi_0,\ldots,\xi_{|w|-1})\in\mathbb{Z}^{|w|}\setminus\{\mathbf{0}\}} \hspace{-2em}  \big|T_{(\xi_0,\ldots,\xi_{|w|-1})}\big|\, \min\!\Bigg(\frac{N'}{b^{r-1}}, \Bigg\|\sum_{0\le j <|w|}  \frac{\xi_j}{\alpha^{k+j+1}}\Bigg\|^{-1} \Bigg). \label{eq:11}
\end{align}
If $\xi_j \in a \mathbb{Z} \setminus \{0\}$ for some $0 \le j < |w|$, then Lemma~\ref{lem6} yields that
\[
T_{(\xi_0,\ldots,\xi_{|w|-1})} = 0.
\]
Thus we only have to consider vectors $(\xi_0,\ldots,\xi_{|w|-1}) \ne \mathbf0$ such that $\xi_j \not\in a \mathbb{Z} \setminus \{0\}$ for all $0\le j <|w|$. Depending on the maximal entry of the vector $(\xi_0,\ldots,\xi_{|w|-1})$, we use different estimations in order to bound $E(k,r,N')$. We have two different cases:
\begin{itemize}
\item
Assume first that $|\xi_j| \le \alpha^k/|w|$ for all $0\le j <|w|$. Then we have $\big|\sum_{0\le j <|w|}  \frac{\xi_j}{\alpha^{k+j+1}}\big| \le \frac{1}{\alpha}$. Since $\xi_j \not\in a \mathbb{Z}$ for the maximal $j < |w|$ with $\xi_j \ne 0$, we also have
\[
\Bigg|\sum_{0\le j <|w|}  \frac{\xi_j}{\alpha^{k+j+1}}\Bigg| = \frac{1}{\alpha^{k+1}}\, \Bigg|\sum_{0\le j <|w|}  \frac{\xi_j b^j}{a^j}\Bigg| \ge \frac{1}{\alpha^{k+1}}\, \frac{1}{a^{|w|-1}} \gg \alpha^{-k}.
\]
This implies
\[
\Bigg\|\sum_{0\le j <|w|}  \frac{\xi_j}{\alpha^{k+j+1}}\Bigg\|^{-1} \ll \alpha^k.
\]
\item
Assume now that $|\xi_j|>\alpha^k/|w|$ for some $0\le j< |w|$. Then
\[
 \sum_{|\xi_i|>\alpha^k/|w|} \big|c'_{w_j,r,\xi_j/b^r}\big| \ll \sum_{|\xi|>\alpha^k/|w|} \alpha^{2r} \frac{b^{2r}}{\xi^2} \ll a^{2r}\,\alpha^{-k}.
\]
Together with~\eqref{star11}, this implies
\[
\sum_{\substack{(\xi_0,\ldots,\xi_{|w|-1})\in\mathbb{Z}^{|w|}\setminus\{\mathbf{0}\}:\\ |\xi_j| > \alpha^k/|w|\ \mathrm{for}\ \mathrm{some}\ j}} \hspace{-2em}  \big|T_{(\xi_0,\ldots,\xi_{|w|-1})}\big| \ll a^{(|w|-1)r} \,a^{2r}\,\alpha^{-k}=a^{(|w|+1)r} \,\alpha^{-k}.
\]
\end{itemize}
We get, using~\eqref{eq:12} and the fact that $N'\le N$,
\begin{align*}
 E(k,r,N') & \ll b^r \hspace{-2em} \sum_{\substack{(\xi_0,\ldots,\xi_{|w|-1})\in\mathbb{Z}^{|w|}\setminus\{\mathbf{0}\}:\\ |\xi_j| \le \alpha^k/|w|\ \mathrm{for}\ \mathrm{all}\ j}} \hspace{-2em}  \big|T_{(\xi_0,\ldots,\xi_{|w|-1})}\big|\, \min\!\Bigg(\frac{N'}{b^{r-1}}, \Bigg\|\sum_{0\le j <|w|}  \frac{\xi_j}{\alpha^{k+j+1}}\Bigg\|^{-1} \Bigg)\\
&\qquad + b^r \hspace{-2em} \sum_{\substack{(\xi_0,\ldots,\xi_{|w|-1})\in\mathbb{Z}^{|w|}\setminus\{\mathbf{0}\}:\\ |\xi_j| > \alpha^k/|w|\ \mathrm{for}\ \mathrm{some}\ j}} \hspace{-2em}  \big|T_{(\xi_0,\ldots,\xi_{|w|-1})}\big|\, \min\!\Bigg(\frac{N'}{b^{r-1}}, \Bigg\|\sum_{0\le j <|w|}  \frac{\xi_j}{\alpha^{k+j+1}}\Bigg\|^{-1} \Bigg)\\
&\ll  \alpha^k\, a^{|w|r}\, b^r +  N\, a^{(|w|+1)r}\, \alpha^{-k}.
\end{align*}
Inserting this in~\eqref{eq:9} and summing over $k$ implies
\[
\sum_{M\le k\le L-M} \sum_{1\le n\le N} t_{w,k,r}(n) = \frac{N L}{a^{|w|}} + N\, \mathcal{O}\bigg(M + \frac{a^{(|w|+1)r}}{\alpha^M} + \frac{a^{|w|r}b^r }{\alpha^M} + \frac{L\, a^{|w|r} b^r}{N}\bigg).
\]
Now~\eqref{eq:13} yields
\begin{align*}
S_w(N) = \frac{N}{a^{|w|}} \log_\alpha N
+ N\, \mathcal{O}\bigg(M + \frac{L \varrho^r}{a^r} + \frac{(a \varrho)^r +  (a^{|w|+1})^r+(a^{|w|} b)^r}{\alpha^M} + \frac{L\, a^{|w|r} b^r}{N}\bigg).
\end{align*}
Let $r = \big\lfloor\frac{\log\log N}{\log(a/\varrho)}\big\rfloor$ and $M = \lfloor C \log\log N \rfloor$, with $C$ large enough such that
\[
\frac{(a \varrho)^r +  (a^{|w|+1})^r+(a^{|w|} b)^r}{\alpha^M} = \mathcal{O}(1).
\]
Then we have $S_w(N) = \frac{N}{a^{|w|}} \log_\alpha N + \mathcal{O}(N \log\log N)$, which proves the main theorem.
\end{proof}

Our two corollaries follow quite immediately from the main theorem.

\begin{proof}[Proof of Corollary~\ref{c1}]
Let the representation of $n$ in base $a/b$ be given as in \eqref{rationalrepresentation}. The summatory function  of $s_{a/b}$ satisfies
\[
\sum_{n=1}^{N}s_{a/b}(n) = \sum_{n=1}^{N}\sum_{k= 0}^{\ell(n)-1} \varepsilon_k(n) = \sum_{d\in\mathcal{D}} d S_d(N).
\]
Thus, the main theorem implies that
\[
 \sum_{n=1}^{N}s_{a/b}(n) =  \frac{N}{a} \log_{a/b} N \left(\sum_{d=0}^{a-1} d\right) + \mathcal{O}\big(N \log\log N \big),
\]
which proves the desired result.
\end{proof}

\begin{proof}[Proof of Corollary~\ref{c2}]
Let $(z_n)_{n\ge1}$ be the sequence of digits of $\mathfrak{z}_{a/b}$ in base~$a$, that is,
\[
 \mathfrak{z}_{a/b} = \sum_{n\ge1} \frac{z_n}{a^n}\,.
\]
If $w=(w_{r-1}, \ldots, w_0)$, with $w_i\in \mathcal{D}$, is a sequence of digits (of length $|w|=r$), set
\[
\gamma_w(x)= \#\,\{ 1\le n \le x : (z_{n+r-1}, \ldots, z_n)=w\}.
\]
We have to show that for each finite sequence of digits $w$ one has
\[
 \lim_{x\to\infty} \frac{\gamma_w(x)}{x} = \frac{1}{a^{|w|}}.
\]
Let $N_x$ be the largest integer satisfying
\[
 \sum_{n=1}^{N_x}\ell(n) \le x+r.
\]
Then we have $\gamma_w(x) \ge S_w(N_x)$ and $\gamma_w(x) \le S_w(N_x)  + (|w|-1)(N_x-1)+ \ell(N_x+1)$.
Hence, we obtain
$\gamma_w(x) = S_w(N_x) + \mathcal{O}(N_x)$, and the main
theorem implies
\[
 \gamma_w(x) = \frac{N_x}{a^{|w|}}\log_{a/b} N_x + \mathcal{O}\big(N_x \log\log N_x\big).
\]
Since $x= N_x\log_{a/b} N_x + O(N_x)$, we have proved that $\mathfrak{z}_{a/b}$ is a normal number in base~$a$.
\end{proof}

\section{Perspectives}\label{section:perspectives}

In this section, we want to discuss briefly some open questions and possible directions of future research related to the topic of the present paper.

\subsubsection*{Distribution of $s_{a/b}$ in residue classes}
A well-known theorem of Gelfond~\cite{Gelfond:68} states that the sum-of-digits function $s_q$ in base $q$ is equidistributed in residue classes. To be more precise, Gelfond showed that if $q,m$, and $r$ are positive integers with $q\ge2$ and $(m,q-1) = 1$, then
\begin{align*}
\#\,\{1\le n \le N: n \equiv \ell_1\bmod r,\, s_q(n) \equiv \ell_2 \bmod m\,\} = \frac{N}{mr} + \mathcal{O}(N^{\lambda})
\end{align*}
for all $\ell_1,\ell_2\in\mathbb{Z}$, where $\lambda <1$ is a positive constant only depending on $q$ and  $m$. It would be interesting to obtain similar results for  $s_{a/b}$. An easier version of this problem consists in studying the analogous problem for subsets of $\mathbb{Z}[a/b]$ rather than $\mathbb{N}$.

\subsubsection*{Rational number systems, primes and polynomials}
Mauduit and Rivat~\cite{MR:09,MR:10} recently showed that the sum-of-digits function of primes as well as squares is equidistributed in residue classes. It seems to be difficult to obtain nontrivial bounds for $\{p\le N : p \mbox{ prime, } s_{a/b}(p)\equiv \ell \bmod m\}$ and   $\{n\le N : s_{a/b}(n^2)\equiv \ell \bmod m\}$. As in the previous problem, attacking the same questions in $\mathbb{Z}[a/b]$ rather than in $\mathbb{N}$ could be more doable.

\subsubsection*{Asymptotic distribution results for $s_{a/b}$}
Bassily and K\'atai~\cite{BK:95} showed that the standard base-$q$ sum-of-digits function on polynomial sequences is asymptotic normally distributed.
Can one get results on the asymptotic behavior of $s_{a/b}$ on different subsequences using our Fourier analytic approach? Compare also with~\cite{Mad:10}, where asymptotic normality was proven for the sum-of-digits function in the Gaussian integers and in more general number systems.

\subsubsection*{Number systems in finite fields and canonical number systems}
Beck et al.~\cite{BBST:09} introduced a rather general notion of number systems defined for polynomial rings over finite fields. Here non-monic polynomials form the analogs of rational bases. Since Mahler's problem is better understood for non-monic polynomials over finite fields (see e.g.\ Allouche et al.~\cite{ADKK:01}), one can probably gain more complete results and better error terms in this setting. Moreover, the relations between these number systems and the associated Mahler problem are not yet explored here. Exploring this relation could well lead to new insights. It is also not known how difficult the underlying language of representations is.

Another possible generalization would be canonical number systems; see Peth\H{o}~\cite{Pet:91} for a definition. Here one could combine the results of Dumont et al.~\cite{DGT:99} on representations of integers in canonical number systems with our results and explore generalizations of Mahler's problem for algebraic numbers.

\bibliographystyle{amsalpha}
\bibliography{rational_patterns}
\end{document}